\renewcommand {\epsilon}{\varepsilon}
\renewcommand {\le}{\leqslant}
\renewcommand {\ge}{\geqslant}
\renewcommand {\le}{\leqslant}
\newtheorem{Theorem}{Theorem}[section]
\newtheorem{Lemma}[Theorem]{Lemma}
\newtheorem{Cor}[Theorem]{Corollary}
\newtheorem{Prop}[Theorem]{Proposition}
\newtheorem{Rem}[Theorem]{Remark}
\def\cF{\mathcal{F}}
\def\cG{\mathcal{G}}
\def\cH{\mathcal{H}}
\def\cL{\mathcal{L}}
\def\cM{\mathcal{M}}
\def\cP{\mathcal{P}}
\def\cS{\mathcal{S}}
\def\Erw{\mathbb{E}}
\def\N{\mathbb{N}}
\def\Prob{\mathbb{P}} 
\def\R{\mathbb{R}}      
\def\bbS{\mathbb{S}}
\def\U{\mathbb{U}}
\def\V{\mathbb{V}}
\def\Z{\mathbb{Z}}
\def\eps{\varepsilon}
\def\vth{\vartheta}
\def\1{\mathbf{1}}
\def\3{{\ss}}
\def\eqdist{\stackrel{d}{=}}
\def\wh{\widehat}
\def\wtil{\widetilde}
\def\dd{\mathrm{d}}
\def\IRg{\R_{\scriptscriptstyle >}}
\def\IRge{\R_{\scriptscriptstyle\geqslant}}
\def\BRW{\textsf{BRW}}
\newcommand{\ind}[1]{\1_{\{#1\}}}
\begin{document}

\begin{frontmatter}

\title{A simple method to find all solutions to the functional equation of the smoothing transform}
\runtitle{The functional equation of the smoothing transform}

\begin{aug}
\author{\fnms{Gerold}  \snm{Alsmeyer}\corref{}\thanksref{t1}\ead[label=e1]{gerolda@uni-muenster.de}}
\and
\author{\fnms{Bastien} \snm{Mallein}\corref{}\thanksref{t2}\ead[label=e2]{mallein@math.univ-paris13.fr}}
    \thankstext{t1}{Funded by the Deutsche Forschungsgemeinschaft (DFG) under Germany's Excellence Strategy EXC 2044--390685587, Mathematics M\"unster: Dynamics--Geometry--Structure.}
   \thankstext{t2}{LAGA, UMR 7539, Universit\'e Paris 13 - Sorbonne Paris Cit\'e, Universit\'e Paris 8. Partially funded by the ANR project 16-CE93-0003 (ANR MALIN).}

\runauthor{G.~Alsmeyer and B.~Mallein}
  \affiliation{University of M\"unster and Universit\'e Paris 13}
  
\address{G.~Alsmeyer\\Inst.~Math.~Stochastics,\\ Department
of Mathematics\\ and Computer Science\\ University of M\"unster\\ Orl\'eans-Ring 10, D-48149\\ M\"unster, Germany\\
          \printead{e1}\\
          }
          
          \address{B.~Mallein\\LAGA, UMR 7539\\ Universit\'e Paris 13 - Sorbonne Paris Cit\'e\\ Universit\'e Paris 8\\ 99 avenue Jean-Baptiste Cl\'ement\\ 93430 Villetaneuse, France\\
          \printead{e2}
          }
\end{aug}

\begin{abstract}
Given a nonincreasing null sequence $T = (T_j)_{j \ge 1}$ of nonnegative random variables satisfying some classical integrability assumptions and $\Erw(\sum_{j}T_{j}^{\alpha})=1$ for some $\alpha>0$, we characterize the solutions of the well-known functional equation 
$$ f(t)\,=\,\textstyle\Erw\left(\prod_{j\ge 1}f(tT_{j})\right),\quad t\ge 0, $$ 
related to the so-called smoothing transform and its min-type variant. In order to do so within the class of nonnegative and nonincreasing functions, we provide a new three-step method whose merits are that\\
(1) it simplifies earlier approaches in some relevant aspects,\\
(2) it works under weaker, close to optimal conditions
in the so-called boun\-dary case when $\Erw\big(\sum_{j\ge 1}T_{j}^{\alpha}\log T_{j}\big)=0$,\\
(3) it can be expected to work as well in more general setups like random environment.

At the end of this article, we also give a one-to-one correspondence between those solutions that are Laplace transforms and thus correspond to the fixed points of the smoothing transform and certain fractal random measures. The latter are defined on the boundary of a weighted tree related to an associated branching random walk.

\end{abstract}

\begin{keyword}[class=MSC]
\kwd[Primary ]{39B22}
\kwd[; secondary ]{60E05 60J85,60G42}
\end{keyword}

\begin{keyword}
\kwd{stochastic fixed-point equation}
\kwd{distributional fixed point}
\kwd{smoothing transform}
\kwd{branching random walk}
\kwd{multiplicative martingales}
\kwd{Choquet-Deny-type lemma}
\kwd{fractal random measure}
\kwd{disintegration}
\kwd{many-to-one lemma}
\end{keyword}

\end{frontmatter}

\section{Introduction}\label{sec:intro}

Given a nonincreasing null sequence $T=(T_{j})_{j\ge 1}$ of nonnegative random variables, the mapping $f\mapsto\Erw\prod_{j\ge 1}f(tT_{j})$ for suitable functions $f:\IRge\to\R$ with $\IRge=[0,\infty)$ is called the smoothing transform and any $f$ satisfying
\begin{equation}\label{eq:functional FPE}
f(t)\ =\ \Erw\Bigg(\prod_{j\ge 1}f(tT_{j})\Bigg)
\end{equation}
a fixed point of this mapping. The problem of identifying all fixed points within certain function classes, here Laplace transforms of probability measures on $\IRge$ or, more generally, survival functions of nonnegative random variables, has been dealt with in a host of articles such as \cite{DurLig:83,Liu:98,BigKyp:05}, with most general results obtained in \cite{AlsBigMei:12}. The last reference should also be consulted for a more detailed account of the earlier literature and for further background information.

\vspace{.1cm}

The existence of fixed points of the smoothing transform has been studied mostly under the following standard conditions which are also assumed throughout this article: First,
\begin{equation}
\Erw\Bigg(\sum_{j\ge 1}\1_{\{T_{j}>0\}}\Bigg)\ >\ 1,\label{eq:(T1)}
\end{equation}
which ensures that the product in \eqref{eq:functional FPE} remains nonempty with positive probability upon iterating the fixed point equation; second, the existence of $\alpha > 0$ such that
\begin{equation}\label{eqn:alphaDef}
\Erw\Bigg(\sum_{j\ge 1} T_{j}^{\alpha}\Bigg)\,=\,1,
\end{equation}
and third that
\begin{equation}\label{eq:derivative at alpha}
\Erw\left(\sum_{j\ge 1}T_{j}^{\alpha}\log T_{j}\right)\,\le\,0.
\end{equation}
We mention that nonnegative solutions to \eqref{eqn:smoothingTransform} may still exist if \eqref{eqn:alphaDef} for some $\alpha>0$ is weakened to 
\begin{equation}\label{eqn:liu}
\Erw\Bigg(\sum_{j\ge 1}T_{j}^{\beta}\Bigg)\ \le\ 1\quad\text{for some }\beta\in [0,1].
\end{equation}
This was shown by Liu \cite[Thm.~1.1]{Liu:98} using a truncation argument.

\vspace{.1cm}
The aim of this article is to introduce a new and simple method that provides the characterization of all fixed points of the smoothing transform within the framework just given and some extra integrability conditions. Not working under most general conditions as in \cite{AlsBigMei:12} allows us to simplify and streamline the presentation of our method while, on the other hand, still being able to treat the so-called boundary case (see \eqref{eqn:boundaryCase}) under close to optimal conditions and to thus improve related earlier results in \cite{BigKyp:05,AlsBigMei:12}.

\vspace{.1cm}
If $f$ is the Laplace transform of a probability law $\nu$ on $\IRge$, then it solves Eq.~\eqref{eq:functional FPE} iff $\nu$ is a distributional fixed point of the (homogeneous) smoothing transform $\bbS$ which maps $\nu$ to the law of the random variable $\sum_{j\ge 1}T_{j}X_{j}$, where $X,X_{1},X_{2},\ldots$ denote i.i.d.~random variables with common law $\nu$ and independent of $T$. Hence, Eq.~\eqref{eq:functional FPE} corresponds to $\bbS(\nu)=\nu$ or, equivalently, 
\begin{equation}\label{eqn:smoothingTransform}
X\ \eqdist\ \sum_{j\ge 1}T_{j}X_{j}
\end{equation}
where $\eqdist$ means equality in distribution.

\vspace{.1cm}
In the case when $f$ is the (left continuous) survival function of a probability distribution $\nu$ on $\IRge$, viz. $f(t)=\nu([t,\infty))$ for $t\ge 0$, the fixed-point property \eqref{eq:functional FPE} corresponds to the distributional fixed-point equation
\begin{equation}\label{eq:SFPE max transform}
X\ \eqdist\ \inf\{X_{j}/T_{j}:j\ge 1\text{ and }T_{j}>0\}
\end{equation}
with $X,X_{1},X_{2},\ldots$ as before. Here the infimum over the empty set is defined to be $\infty$.

\vspace{.1cm}
As in \cite{AlsBigMei:12}, let $\cS(\cM)$ denote the set of solutions to Eq.~\eqref{eq:functional FPE} within the class
\begin{align*}
&\cM\ =\ \left\{f:\IRge\to [0,1]:\,f\text{ is nonincreasing and left continuous},\right.\\
&\hspace{3cm}\left. f(0)=f(0+)=1\text{ and }0<f(t)<1\text{ for some }t>0\right\}
\end{align*}
which comprises all survival functions on $\IRge$ as well as its subclass $\cL$ of Laplace transforms of probability measures on $\IRge$, ruling out only the trivial solutions $f\equiv 1$ and $f=1_{\{0\}}+q\,\1_{\IRg}$, where $q$ denotes the extinction probability of the associated branching random walk (\BRW) described below. Note that the last fact entails $\cS(\cL)\subset\cS(\cM)$. In order to determine $\cS(\cM)$ (and thus $\cS(\cL)$), which has already been done in \cite{AlsBigMei:12}, we provide here a new approach that works under relaxed conditions on the random sequence $T$ and also considerably simplifies some of the key arguments used in \cite{AlsBigMei:12}. This novel approach consists of three steps that will be outlined further below, after the introduction of some necessary notation, further background information and the most important assumptions.

\vspace{.1cm}
It is well-known that the fixed points of the smoothing transform satisfying \eqref{eqn:alphaDef} are intimately related to the essentially unique fixed-point of the modified smoothing transform
\begin{equation}\label{eqn:modifiedSmoothing}
\bar{X}\ \eqdist\ \sum_{j\ge 1}T_{j}^\alpha \bar{X}_{j},
\end{equation}
where the $\bar{X}_j$ are i.i.d.~copies of $\bar{X}$ (see also Remark \ref{rem:alpha=1} below). Here we work under conditions ensuring that $\bar{X}$ can be obtained as the limit of a martingale of an associated \BRW\ that we describe next. Let $\V= \bigcup_{n\ge 0}\N^{n}$ be the Ulam-Harris tree of finite integer words, with the convention that $\N^{0}=\{\varnothing\}$ equals the set containing the empty word (and root of the tree). As common, we use $v_{1}\ldots v_{n}$ as shorthand for $v=(v_{1},\ldots,v_{n})$, $|v|=n$ for its length, and $uv$ for the concatenation of two vertices $u,v\in\V$. The restriction of $v$ to its first $j$ coordinates, thus its ancestor at level $j$, is denoted $v(j)$, thus $v(j)=v_{1}\ldots v_{j}$. We set $\partial \V = \N^\N$, which represents the boundary of the tree $\V$.

\vspace{.1cm}
Now let $(T_{j}^{v})_{j\ge 1}$ be i.i.d.~copies of $T$ for any $v\in\V$ and define the multiplicative \BRW\ (also called weighted branching process) as the random map
$$ L:\V\to \IRge,\quad v\ \mapsto\ L(v)\,:=\,\prod_{j=1}^{|v|}T^{v(j-1)}_{v_{j}}. $$
Assumption \eqref{eq:(T1)} entails that $L$ forms a supercritical \BRW, thus with positive probability, for any $n \in \N$ there exists a vertex $v\in\V_{n}:=\{u:|u|=n\}$ such that $L(v)>0$. This event is called the survival set of the \BRW. Thinking of $T_{j}^{v}$ as a weight attached to the edge connecting vertex $v$ with its child $vj$, we see that $L(v)$ equals the total weight of the unique shortest path from the root $\varnothing$ (with $L(\varnothing):=1$) to $v$ obtained by multiplying the edge weights along this path. The natural filtration of the \BRW, reflecting its genealogical structure, is defined by
$$ \cF_{n} = \sigma\big(L(v),\,|v|\le n\big),\quad n\in\N_{0}. $$
There is a deep and meaningful relationship between the fixed points of the smoothing transform and the \BRW\ just introduced. This will be further explained in Section~\ref{sec:measure}. 

\vspace{.1cm}
A second fundamental assumption for the existence of nontrivial solutions to \eqref{eq:functional FPE}, though not necessary as shown in \cite{Liu:98} and already mentioned, is the existence of a minimal positive $\alpha$, called \emph{characteristic exponent of $T$} in \cite{AlsBigMei:12}, such that \eqref{eqn:alphaDef} holds. Then these solutions can be expressed in terms of an associated martingale limit. Namely, by the branching property of the \BRW, the process
\begin{equation}\label{eq:Biggins martingale}
W_{n}\ :=\ \sum_{|v|=n} L(v)^{\alpha},\quad n\ge 0
\end{equation}
constitutes a nonnegative martingale. It was shown by Biggins \cite{Biggins:77} (see also~\cite{Lyons:97} for a simpler proof of his result) that $W_{n}$ converges a.s.~and in $L^{1}$ to a nondegenerate limit $W$ provided that, additionally,
\begin{gather}\label{eqn1:regular case}
\Erw\Bigg(\sum_{j\ge 1}T_{j}^{\alpha}\log T_{j}\Bigg)\,<\,0
\shortintertext{and}
\Erw\left(W_{1} \log W_{1}\right)\,<\,\infty\label{eqn2:regular case}
\end{gather}
hold. Alsmeyer and Iksanov \cite{AlsIks:09} further proved that these conditions are indeed necessary and sufficient whenever $\Erw\big(\sum_{j\ge 1}T_{j}^{\alpha}\log T_{j}\big)$ is well-defined. The law of $W$ forms the unique (up to scaling) solution to the SFPE \eqref{eqn:modifiedSmoothing}.

\begin{Rem}\label{rem:senetaHeyde}
Under more general conditions, for instance, when replacing \eqref{eqn:alphaDef} with \eqref{eqn:liu}, the a.s.~limit of the martingale $(W_n)_{n\ge 0}$ may be $0$, but yet a regularly varying sequence $(a_n)_{n\ge 0}$, called Heyde-Seneta norming, exists such that $a_n W_n$ converges a.s. to a non-degenerate solution of \eqref{eqn:modifiedSmoothing}.
\end{Rem}

Condition \eqref{eqn1:regular case} is obviously more restrictive than Condition \eqref{eq:derivative at alpha}, for it requires $\Erw(\sum_{j\ge 1}T_{j}^{\alpha}\log T_{j})$ to be strictly negative. We call this situation the \emph{regular case}, as opposed to the more difficult \emph{boundary case},  where this expectation is zero and which will be discussed further below. Assuming \eqref{eqn1:regular case} and \eqref{eqn2:regular case} (naturally besides \eqref{eq:(T1)} and \eqref{eqn:alphaDef})  allows the construction of a natural solution to the SFPE. Moreover, we will see that under one more integrability condition, namely
\begin{equation}\label{eqn:finitemean}
\Erw\Bigg(\sum_{j\ge 1}T_{j}^{\alpha}\log T_{j}\Bigg)\,>\,-\infty,
\end{equation}
the set $\cS(\cM)$, and thus all fixed points of Eqs.~\eqref{eqn:smoothingTransform} and \eqref{eq:SFPE max transform}, can be determined quite easily. The structure of the fixed points depends on the lattice-type of $T$. As in \cite{AlsBigMei:12}, $T$ is called geometric with span $r$ or $r$-geometric if $r>1$ is the maximal number such that
\begin{equation}\label{eqn:lattice}
\Prob\left(T_{j} \in\{r^{n}:n\in\Z\}\text{ for all }j\ge 1\right)\,=\,1,
\end{equation}
and nongeometric if no such $r$ exists. A function $h:\IRge\to\IRg$ is called multiplicatively $r$-periodic if $h(rt)=h(t)$ for all $t\ge 0$. Given $r>1$, let $\cH_{r}$ denote the set of all such functions such that $t\mapsto h(t)t^{\alpha}$ is nondecreasing for $\alpha$ satisfying \eqref{eqn:alphaDef}. Let also $\cH_{1}$ be the the set of positive constant functions on $\IRge$. In order to be able to describe $\cS(\cL)$, we put $\cP_{1}=\cH_{1}$ and denote by $\cP_{r}$ for $r>1$ the set of $h\in\cH_{r}$ such that $t^{\alpha}h(t)$ has a completely monotone derivative. When $\alpha=1$, the latter requirements force $h$ to be constant, thus $\cP_{r}=\cP_{1}$. These classes were first introduced in \cite{DurLig:83}.

\begin{Theorem}\label{thm:mainNonBoundary}
Let $T$ satisfy \eqref{eq:(T1)}, \eqref{eqn:alphaDef}, \eqref{eqn1:regular case}, \eqref{eqn2:regular case}, \eqref{eqn:finitemean} and $r\ge 1$ denote its span. Then the elements of $\cS(\cM)$ are the functions
$$ f(t)\ =\ \Erw(\exp(-h(t)t^{\alpha}W)),\quad t\ge 0 $$
with left continuous $h\in\cH_{r}$ and $W = \lim_{n \to \infty}W_n$ a.s. Moreover, $\cS(\cL)$ is nonempty only if $0<\alpha\le 1$ in which case it consists of all $f\in\cS(\cM)$ with $h\in\cP_{r}$.
\end{Theorem}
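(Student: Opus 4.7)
My first step is to verify that every $f$ of the claimed form belongs to $\cS(\cM)$. Given a left-continuous $h\in\cH_{r}$, set $f(t)=\Erw\exp(-h(t)t^{\alpha}W)$. Using the distributional identity $W\eqdist\sum_{j\ge 1}T_{j}^{\alpha}W^{(j)}$, where $(W^{(j)})_{j\ge 1}$ are i.i.d.~copies of $W$ independent of $T$, I obtain
\begin{align*}
\Erw\Bigg(\prod_{j\ge 1}f(tT_{j})\Bigg)\ =\ \Erw\exp\Bigg(-\sum_{j\ge 1}h(tT_{j})(tT_{j})^{\alpha}W^{(j)}\Bigg).
\end{align*}
If $r>1$, the values of $T_{j}$ lie a.s.~in $r^{\Z}$, so multiplicative $r$-periodicity of $h$ yields $h(tT_{j})=h(t)$; if $r=1$, then $h$ is constant and the identity holds trivially. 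In either case the right-hand side collapses to $\Erw\exp(-h(t)t^{\alpha}W)=f(t)$. Membership in $\cM$ follows from monotonicity of $t\mapsto h(t)t^{\alpha}$ and left-continuity of $h$.

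\textbf{Multiplicative martingale.} For the converse inclusion, let $f\in\cS(\cM)$ and, for each fixed $t\ge 0$, consider
$$ M_{n}(t)\ :=\ \prod_{|v|=n}f(tL(v)),\quad n\ge 0. $$
Equation \eqref{eq:functional FPE} together with the branching property of the \BRW\ makes $(M_{n}(t))_{n\ge 0}$ a $[0,1]$-valued $\cF_{n}$-martingale, so it converges a.s.~and in $L^{1}$ to a limit $M_{\infty}(t)$ with $\Erw M_{\infty}(t)=f(t)$. Passing to the limit in the level-$k$ decomposition yields
$$ M_{\infty}(t)\ =\ \prod_{|v|=k}M_{\infty}^{(v)}(tL(v))\quad\text{a.s.}, $$
where $M_{\infty}^{(v)}$ is the analogous limit built from the i.i.d.~copy of the \BRW\ rooted at $v$.

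\textbf{Identification.} The main task, and the step I expect to be the hardest, is to prove
$$ -\log M_{\infty}(t)\ =\ h(t)\,t^{\alpha}W\quad\text{a.s.} $$
for some deterministic left-continuous $h\in\cH_{r}$. Under \eqref{eqn1:regular case}, \eqref{eqn2:regular case} and \eqref{eqn:finitemean}, the Biggins martingale $W_{n}$ converges in $L^{1}$ to a limit $W>0$ a.s.~on the survival event. My plan is to apply a Choquet-Deny-type lemma on the boundary $\partial\V$: by disintegrating $-\log M_{\infty}$ against the intrinsic fractal random measure carried by the \BRW, the extremal decomposition forced by the multiplicative fixed-point identity compels this limit to be a deterministic multiple of $W$, with multiplier depending on $t$ alone. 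Extracting a factor of $t^{\alpha}$ identifies the remaining coefficient as $h(t)$, and the lattice type of $T$ translates into multiplicative $r$-periodicity. Nonincreasingness and left-continuity of $f$ transfer to the corresponding properties of $t\mapsto h(t)t^{\alpha}$, placing $h$ in $\cH_{r}$. Taking expectations then gives the stated representation.

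\textbf{Laplace case.} By Bernstein's theorem, $f\in\cS(\cL)$ iff $f$ is completely monotone with $f(0)=1$. Given the representation from step 3, this is equivalent to $\phi(t):=h(t)t^{\alpha}$ being a Bernstein function, i.e.~having a completely monotone derivative. When $0<\alpha\le 1$ this is precisely the condition defining $\cP_{r}$. When $\alpha>1$, however, the combination of $r$-periodicity of $h$ and complete monotonicity of $(ht^{\alpha})'$ would force $h$ to be constant, and then $\phi'(t)\propto t^{\alpha-1}$ is not completely monotone; hence $\cS(\cL)=\varnothing$ in that regime.
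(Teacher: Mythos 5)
Your sufficiency check and the multiplicative-martingale setup both match the paper, and your treatment of the Laplace case is in the right spirit. However, the identification step---which you yourself flag as the hard one---is where the proposal breaks down. Stating a ``plan'' to ``apply a Choquet--Deny-type lemma on the boundary $\partial\V$'' via the ``extremal decomposition'' of a ``fractal random measure'' is a heuristic, not an argument, and it does not correspond to how the identification is actually carried out. Two concrete ingredients are missing.

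First, \emph{tameness}: before any averaging argument can start, one must prove that $-\log M_\infty(t)$ is even integrable---a priori it could be $+\infty$ with positive probability. The paper establishes this by showing $\sup_{0<t\le 1}\,\tfrac{-\log f(t)}{t^{\alpha}}<\infty$, and the proof is not soft: it uses the first-passage stopping lines $\Upsilon_a$, the fact that $\sum_{v\in\Upsilon_a}L(v)^{\alpha}\to W$ a.s.\ as $a\to0$, and tightness of the overshoot of the associated random walk (this is precisely where the extra assumption \eqref{eqn:finitemean} is consumed). Without this bound, the function $F(t):=\Erw(-\log M_\infty(t))$---which is what a Choquet--Deny argument would have to act on---is not known to be finite, and the whole program stalls. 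Your outline never addresses this.

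Second, the Choquet--Deny lemma applies to bounded harmonic functions of the many-to-one random walk $(S_n)_{n\ge 0}$, not to an object living on $\partial\V$. The actual mechanism is: the branching property gives $F(t)=\Erw\big(\sum_{|v|=1}F(tL(v))\big)$, the many-to-one lemma turns this into $G(x)=\Erw\,G(x+S_1)$ for $G(x):=e^{-\alpha x}F(e^{-x})$, the tameness bound makes $G$ bounded and right-continuous, and Choquet--Deny then forces $G$ to be $\log r$-periodic (or constant in the nonarithmetic case), i.e.\ $F(t)=h(t)t^{\alpha}$. Only after this does one reassemble $-\log M_\infty(t)=\lim_n\sum_{|v|=n}F(tL(v))=h(t)t^{\alpha}W$. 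None of this chain is present in your proposal; the vague disintegration picture you invoke corresponds to the supplementary Section~\ref{sec:measure}, which is an interpretation of the result, not the engine of its proof.

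A smaller point on the Laplace case: the assertion that ``$r$-periodicity plus complete monotonicity of $(ht^{\alpha})'$ forces $h$ constant'' is only known (and only needed) for $\alpha=1$. For $\alpha>1$ the cleaner argument is that $h$ is bounded above and below on $(0,\infty)$, so $h(t)t^{\alpha}\asymp t^{\alpha}$ grows faster than linearly, whereas every Bernstein function is $O(t)$ at infinity; hence no $h\in\cH_r$ can make $f$ completely monotone when $\alpha>1$, and $\cS(\cL)=\varnothing$.
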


This result was derived in \cite{AlsBigMei:12} by first showing the slow variation of the function $(1-\Erw e^{-tW})/t$ at $t=0$ (Thm.~3.1) and then, given any $f\in\cS(\cM)$, the same property for $(1-f(t))/t^{\alpha}h(t)$ for a suitable $h\in\cH_{r}$ (Thms. 3.2 and 3.3). Our method of proof does not need these intermediate results, but rather replaces them by a simpler boundedness argument.

\begin{Rem}\label{rem:alpha=1}\rm
If $\alpha=1$, then Eq.~\eqref{eqn:smoothingTransform} becomes Eq.~\eqref{eqn:modifiedSmoothing}, and the only fixed point, modulo positive multiplicative constants, is given by the limit $W$ of the Biggins martingale defined in \eqref{eq:Biggins martingale}. This fixed point is \emph{endogenous} in the sense of Aldous and Bandyopadhyay \cite{AldBan:05} which means that $W$ can be constructed as a measurable function of the \BRW\ or, equivalently, that it is $\cF_{\infty}$-measurable, where $\cF_{\infty}:=\sigma(\cF_{n},\,n\ge 0)$. If $\alpha < 1$, there are no endogenous fixed points.
\end{Rem}

\begin{Rem}\rm
Although working here under conditions that ensure the convergence of the additive martingale, our method can be adapted to the case of Heyde-Seneta-type norming mentioned in Rem.~\ref{rem:senetaHeyde}. Then one has to replace the function $t^\alpha$ by a suitable regularly varying function of index $\alpha$ in the proof of Theorem \ref{thm:mainNonBoundary}.
\end{Rem}

Nontrivial solutions to Eq.~\eqref{eq:functional FPE} also exist in the boundary case when
\begin{equation}\label{eqn:boundaryCase}
\Erw\Bigg(\sum_{j\ge 1}T_{j}^{\alpha}\log T_{j}\Bigg) = 0,
\end{equation}
holds in the place of \eqref{eqn1:regular case}. In this case, the martingale limit $W$ is a.s.~zero. However, assuming \eqref{eqn:boundaryCase}, the process
\begin{equation}\label{eqn:derivativeMartingale}
 Z_{n}\ =\ \sum_{|v|=n}L(v)^{\alpha}(-\log L(v)), \quad n \ge 0
\end{equation}
is also a martingale, known as the derivative martingale. Quite general, but nonoptimal  conditions for the almost sure convergence of $Z_{n}$ to a nonnegative and nondegenerate limit were provided by Biggins and Kyprianou~\cite{BigKyp:04}. Later A{\"{\i}}d\'ekon \cite{Aidekon:13} determined necessary conditions, that Chen \cite{Chen:15} found sufficient. More precisely, if
\begin{equation}\label{eqn:ncsDerivative}
\Erw\Bigg(\sum_{j\ge 1}T_{j}^{\alpha}\log^{2}T_{j}\Bigg) \in (0,\infty), \quad \Erw W_{1}\log^{2}W_{1}\,+\,\Erw\wtil{X}\log\wtil{X}\ <\ \infty,
\end{equation}
where $\wtil{X}:=\sum_{j\ge 1}T_{j}^{\alpha}\log_{+}\!T_{j}$, then
\begin{equation}\label{eqn:derivativeLimit}
Z\, :=\, \lim_{n\to\infty} Z_{n} \quad \text{a.s.}
\end{equation}
exists and is almost surely positive on the survival set of the \BRW.

\vspace{.1cm}
Biggins and Kyprianou \cite{BigKyp:05} and Alsmeyer et al. \cite{AlsBigMei:12} proved analogs of Theorem~\ref{thm:mainNonBoundary} in the boundary case, where the limit $Z$ of the derivative martingale replaces the limit $W$ of the additive martingale. Their proofs required some exponential integrability condition, see (A) in \cite{BigKyp:05} and (A4b) in \cite{AlsBigMei:12}. Using the same techniques as in the proof of Theorem~\ref{thm:mainNonBoundary}, we can extend their results without any additional assumption other than those ensuring the nondegeneracy of the derivative martingale. This can be seen as a major advantage of our approach.

\begin{Theorem}\label{thm:mainDerivative}
Let $T$ satisfy \eqref{eq:(T1)}, \eqref{eqn:alphaDef}, \eqref{eqn:boundaryCase} and \eqref{eqn:ncsDerivative}. Then all assertions of Theorem~\ref{thm:mainNonBoundary} remain valid when replacing $W$ with $Z$ in the definition of the solutions $f$.
\end{Theorem}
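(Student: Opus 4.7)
The plan is to adapt the three-step method used for Theorem~\ref{thm:mainNonBoundary}, replacing the additive martingale limit $W$ throughout by the derivative martingale limit $Z$. The argument splits into a forward (verification) and a reverse (characterization) direction, and the algebraic engine driving both is an a.s.\ recursion for $Z$ that is available precisely because we are in the boundary case.

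For the forward direction, I would first derive the a.s.\ identity
$$ Z\ =\ \sum_{j\ge 1}T_{j}^{\alpha}\,Z^{(j)}\quad\text{on the survival event}, $$
where $(Z^{(j)})_{j\ge 1}$ are conditionally i.i.d.\ copies of $Z$ given $T$. This follows by decomposing $Z_{n}$ at the first generation to obtain
$$ Z_{n}\ =\ \sum_{j\ge 1}T_{j}^{\alpha}\big((-\log T_{j})W_{n-1}^{(j)}\,+\,Z_{n-1}^{(j)}\big), $$
letting $n\to\infty$, and observing that $W^{(j)}=0$ a.s.\ under \eqref{eqn:boundaryCase}. Given left continuous $h\in\cH_{r}$, the $r$-lattice property \eqref{eqn:lattice} gives $h(tT_{j})=h(t)$ a.s., so that for $f(t):=\Erw(\exp(-h(t)t^{\alpha}Z))$ one computes
$$ \Erw\prod_{j\ge 1}f(tT_{j})\ =\ \Erw\prod_{j\ge 1}\Erw\left(e^{-h(t)t^{\alpha}T_{j}^{\alpha}Z^{(j)}}\,\big|\,T\right)\ =\ \Erw\exp\!\left(-h(t)t^{\alpha}\sum_{j\ge 1}T_{j}^{\alpha}Z^{(j)}\right)\ =\ f(t), $$
so that $f\in\cS(\cM)$.

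For the reverse direction, given $f\in\cS(\cM)$, I would associate to $f$ the multiplicative martingale $M_{n}(t):=\prod_{|v|=n}f(tL(v))$, which is bounded in $[0,1]$ and hence converges a.s.\ to a limit $M_{\infty}(t)=\exp(-\Phi(t,\omega))$ for some nonnegative, $\cF_{\infty}$-measurable random function $\Phi$. The three-step method from the proof of Theorem~\ref{thm:mainNonBoundary} then disintegrates $\Phi$ as follows: (i) a $0$-$1$ argument based on \eqref{eq:(T1)} together with nontriviality of $f$ shows $\Phi(t,\cdot)<\infty$ a.s.\ on the survival event; (ii) the recursion inherited by $\Phi$ from $M_{n}$, combined with a Choquet-Deny-type lemma, forces $\Phi(t,\omega)=h(t)t^{\alpha}Z(\omega)$ a.s.\ for some deterministic $h\ge 0$; (iii) left continuity and monotonicity of $f$ together with the lattice type of $T$ then upgrade $h$ to a left continuous element of $\cH_{r}$. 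The refinement $\cS(\cL)$ is recovered from $\cS(\cM)$ exactly as in the regular case by imposing complete monotonicity of $t^{\alpha}h(t)$'s derivative, which is insensitive to the regular/boundary dichotomy.

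The principal obstacle is step (ii), the Choquet-Deny identification of $\Phi(t,\cdot)$ as a deterministic multiple of $Z$. In the regular case this identification rides on the $L^{1}$-convergence of $W_{n}$ to the nondegenerate tilt variable $W$; in the boundary case $W\equiv 0$ and $Z_{n}\to Z$ only a.s., so the usual exponential-tilt machinery and the extra moment hypotheses imposed in \cite{BigKyp:05,AlsBigMei:12} must be bypassed. I expect the resolution to proceed through the spine decomposition and many-to-one lemma for the derivative martingale used by A\"{\i}d\'ekon and Chen to establish \eqref{eqn:derivativeLimit}, coupling $-\log M_{n}(t)$ to the minimum position of the \BRW\ and hence to $Z$ itself, so that assumption \eqref{eqn:ncsDerivative} alone suffices. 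Once this identification is in place, the remaining steps are cosmetic transcriptions of those already appearing in the proof of Theorem~\ref{thm:mainNonBoundary}.
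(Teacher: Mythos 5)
Your proposal correctly identifies the right high-level shape of the argument (disintegration, harmonic analysis via a Choquet--Deny device, identification with the derivative martingale limit) and correctly locates the central difficulty: in the boundary case $W\equiv 0$, so $L^{1}$-convergence of $W_{n}$ can no longer be leveraged. But you do not actually resolve this difficulty; you only gesture at a possible route (``spine decomposition\dots coupling $-\log M_{n}(t)$ to the minimum position'') and then assert that ``the remaining steps are cosmetic transcriptions.'' That last claim is not true, and the gap is exactly where you flag it.

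Concretely, the paper's proof hinges on three technical ingredients that are absent from your sketch. First, since $-\log M(t)=h(t)t^{\alpha}Z$ is not integrable, the paper introduces a truncated multiplicative martingale
\[
M_{n}^{(a)}(t)\ =\ \prod_{\substack{|v|=n\\ tL^{*}(v)<a}}f(tL(v)),\qquad L^{*}(v)=\max_{k\le|v|}L(v(k)),
\]
together with the companion truncated derivative martingale $Z_{n}^{(a)}(t)$, so that $F^{(a)}(t)=\Erw(-\log M^{(a)}(t))$ becomes a well-defined, finite object on which harmonic analysis can be carried out, and then $a\to\infty$ is taken at the very end. Your proposal never addresses the integrability obstruction, so steps (ii) and (iii) as written cannot even be formulated. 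Second, the tameness bound that makes this truncation useful is \emph{not} the regular-case bound $-\log f(t)\le Ct^{\alpha}$ (indeed, the paper remarks after Lemma~\ref{lem:identificationOfH} that no nontrivial $f$ can satisfy \eqref{eqn:tameness} in the boundary case); the correct bound is the logarithmically corrected
\[
\sup_{0<t\le 1}\frac{-\log f(t)}{t^{\alpha}(-\log t)}\ <\ \infty,
\]
which your step (i) (a ``$0$--$1$ argument'' that $\Phi(t,\cdot)<\infty$) does not capture. Third, and most significantly, the Choquet--Deny lemma you invoke must be replaced by a genuinely new result for the random walk \emph{killed} upon entering $(-\infty,0]$: Proposition~\ref{prop:uniqueness H(x)} of the paper shows that any right-continuous harmonic function of at most linear growth for the killed walk equals $\kappa(x)H(x)$, with $H$ the Tanaka/renewal function and $\kappa$ periodic or constant. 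This is a Martin-boundary statement and does not follow from the bounded Choquet--Deny lemma used in the regular case. Without these three ingredients the identification $\Phi(t,\omega)=h(t)t^{\alpha}Z(\omega)$ remains an unproved assertion, so the proposal as written has a genuine gap rather than being a complete alternative route.

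(Your forward-direction recursion $Z=\sum_{j}T_{j}^{\alpha}Z^{(j)}$ and the verification that $\Erw e^{-h(t)t^{\alpha}Z}$ solves \eqref{eq:functional FPE} are fine, though the paper, following \cite{BigKyp:05}, treats this direction as known and concentrates entirely on the converse characterization.)
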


\begin{Rem}\rm
The following example shows that one can easily provide sequences $(T_{n})_{n\ge 1}$ to which Theorem \ref{thm:mainDerivative} applies but which are not covered by the corresponding result in \cite{AlsBigMei:12} because they fail to satisfy the condition (A4b) in that article. Define
$$ T_j\ =\ X (j/\log^3 (j+1))^{1/\alpha},\quad j\ge 1, $$
and let $X$ be any nonnegative random variable such that \eqref{eqn:alphaDef}, \eqref{eqn:boundaryCase} and \eqref{eqn:ncsDerivative} hold. Then we obviously have $\Erw(\sum T_j^{\beta}) = \infty$ for all $\beta < \alpha$.
\end{Rem}

Although the proofs of Theorems~\ref{thm:mainNonBoundary} and~\ref{thm:mainDerivative} need different estimates, they follow the same three-step scheme that we now outline (in the regular case) and believe to work also in more general situations. Given any solution $f\in\cS(\cM)$, it is easily checked that, for each $t\ge 0$,
\begin{align*}
M_{n}(t):=\prod_{|v|=n}f(tL(v)),\quad n\ge 0
\end{align*}
constitutes a positive bounded product martingale whose limit
\begin{align*}
M(t)\ :=\ \lim_{n\to\infty}\prod_{|v|=n}f(tL(v))
\end{align*}
exists a.s.~and in $L^{1}$. These martingales are called in \cite{AlsBigMei:12} the \emph{disintegration} of the fixed point $f$.
\begin{description}\itemsep2pt
\item[1. Tameness:] Using the convergence of the disintegration along a sequence of stopping lines (see Section~\ref{sec:brw}), the first step is to show that any nondegenerate fixed point $f$ must satisfy
$$ \limsup_{t\to0} \frac{-\log f(t)}{t^{\alpha}}\, <\, \infty. $$
\item[2. Harmonic analysis:] This property enables us to derive that $-\log M(t)$ is an integrable random variable with
$$  F(t)\,:=\,\Erw(-\log M(t))\,=\,\Erw\Bigg(\sum_{|v|=1}F(tL(v))\Bigg)\,\le\,C t^{\alpha} $$
for all $t>0$ and suitable $C\in (0,\infty)$. The shown equality can be translated as follows: the function $G(x):=e^{\alpha x}F(e^{-x})$ defines a bounded harmonic function of a certain random walk associated with the \BRW\ (see Section \ref{sec:brw}). By a Choquet-Deny-type lemma, we then deduce that $G$ is constant on the subgroup generated by the walk.
\item[3. Identification of $M(t)$:] It follows by the previous step that $F$ is of the form $F(t)=h(t)t^{\alpha}$ for some $h\in\cH_{r}$, $r$ the span of $T$. To find the value of $M(t)$, we finally observe that
\begin{align*}
\log M(t)\ &=\ \lim_{n\to\infty}\Erw\left(\log M(t)|\cF_{n}\right)\\
&=\ -\lim_{n\to\infty}\sum_{|v| = n}F(tL(v))\ =\ -h(t)t^{\alpha} W
\end{align*}
and thus complete the proof of the main theorem as $f(t)=\Erw e^{\log M(t)}$.
\end{description}

In the boundary case, this three-step method requires the identification of a harmonic function of at most linear growth for a killed random walk. This is done in Proposition~\ref{prop:uniqueness H(x)}, where we prove that there is only one such function modulo scalars, namely the renewal function of a random walk. This result, which may also be of independent interest, forms a generalization of a result by Spitzer \cite[Thm.~E3, p.~332]{Spitzer:76} and shows that the Martin boundary of a centered random walk with finite variance and conditioned to stay positive reduces to a single point.

\vspace{.1cm}
We devote the next section to some classical \BRW\ tools and then prove Theorem~\ref{thm:mainNonBoundary} in Section~\ref{sec:nonBoundary}. Before turning to the more difficult proof of Theorem~\ref{thm:mainDerivative} in Section~\ref{sec:derivative}, we need to show in Section~\ref{sec:RW negative halfline} a Choquet-Deny-type result asserting that any right-continuous and at most linearly growing  harmonic function of a centered random walk killed upon entering $\IRge$ equals the Tanaka solution (see \eqref{eq:Tanaka function}) up to a constant, or a periodic function in the lattice case. Finally, in Section~\ref{sec:measure} we briefly describe a one-to-one connection between the solutions of \eqref{eqn:smoothingTransform} and certain fractal random measures on the boundary of the weighted tree related to the \BRW.

\section{Preliminary results for the classical branching random walk}
\label{sec:brw}

This section collects some well-known tools in the study of \BRW's that will be needed later on, namely the many-to-one lemma and some facts about stopping lines.

The many-to-one lemma is a widely known result, which can be traced back at least to Peyri\`ere \cite{Peyriere:74} and Kahane and Peyri\`ere \cite{KahanePey:76}. It links additive moments of the \BRW\ to random walk estimates. Consider a zero-delayed random walk $(S_{n})_{n\ge 0}$ with increment distribution specified as
\begin{equation}\label{eqn:defManytooneRW}
\Erw g(S_{1})\ =\ \Erw\Bigg(\sum_{j\ge 1}T_{j}^{\alpha}g(-\log T_{j})\Bigg).
\end{equation}
for nonnegative measurable $g$.

\begin{Lemma}[Many-to-one lemma]\label{lem:manytoone}
For all $n\ge 0$ and all nonnegative measurable functions $g$, we have 
\[\Erw g(S_{1},\ldots,S_{n})\ =\ \Erw\Bigg(\sum_{|v|=n} L(v)^{\alpha}g(-\log L(v(j)), j \le n)\Bigg).\]
\end{Lemma}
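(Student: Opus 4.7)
The plan is to proceed by induction on $n \ge 0$, exploiting the branching property of the weighted tree together with the defining relation \eqref{eqn:defManytooneRW} of the random walk increment.

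For $n = 0$ both sides reduce to $g$ evaluated at the empty tuple (recall $L(\varnothing) = 1$), and for $n = 1$ the right-hand side is simply $\Erw\sum_{j\ge 1} T_j^{\alpha}\,g(-\log T_j) = \Erw g(S_1)$ by the definition of the walk. So the interesting content lies in the inductive step.

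Assuming the identity for some $n \ge 1$, I would decompose any $v \in \V_{n+1}$ as $v = jw$ with $j = v_1 \in \N$ and $w = (v_2,\ldots,v_{n+1}) \in \N^n$. The branching construction of $L$ gives $L(v) = T_j^{\varnothing}\cdot L^{(j)}(w)$, where the families $(L^{(j)})_{j\ge 1}$, attached to the weighted subtrees rooted at the children of $\varnothing$, are i.i.d.~copies of $L$ and independent of $T^{\varnothing} = (T_j^{\varnothing})_{j\ge 1}$. Correspondingly, $-\log L(v(1)) = -\log T_j^{\varnothing}$ and $-\log L(v(k)) = -\log T_j^{\varnothing} - \log L^{(j)}(w(k-1))$ for $2 \le k \le n+1$. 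Conditioning on $T^{\varnothing}$ and applying the inductive hypothesis to each inner sum over $\V_n$ (with a test function shifted by $-\log T_j^{\varnothing}$) recasts the right-hand side as
$$\Erw\sum_{j\ge 1}(T_j^{\varnothing})^{\alpha}\,\Psi(-\log T_j^{\varnothing}),\quad\text{where }\Psi(x) := \Erw\,g\bigl(x,\,x+S_1',\ldots,x+S_n'\bigr),$$
and $(S_k')_{k\ge 0}$ is an independent copy of $(S_k)_{k\ge 0}$. A single application of \eqref{eqn:defManytooneRW} to the test function $\Psi$ then collapses this to $\Erw g(S_1,\,S_1+S_1',\ldots,S_1+S_n')$, which equals $\Erw g(S_1,\ldots,S_{n+1})$ by the independence-of-increments property of the random walk.

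No real analytic obstacle arises: nonnegativity of $g$ together with Tonelli's theorem fully justifies the interchanges of sums, expectations, and conditional expectations. The only delicate point is notational—correctly translating the genealogical indices $v(k)$ under the root-child decomposition and keeping track of which independent copy $L^{(j)}$ is being used in each subtree. Once that bookkeeping is settled, the induction closes immediately.
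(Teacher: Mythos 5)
The paper does not actually prove Lemma~\ref{lem:manytoone}; it is stated as a classical result traced to Peyri\`ere and Kahane--Peyri\`ere, and the subsequent Lemma~\ref{lem:manytooneStopped} takes it as a black box. Your inductive argument via the root--child decomposition is correct and is the standard way to establish it: the base case $n=1$ is exactly the defining relation~\eqref{eqn:defManytooneRW}, the decomposition $L(v)=T_{v_1}^{\varnothing}\,L^{(v_1)}(w)$ together with independence of the subtrees from $T^{\varnothing}$ reduces the $(n+1)$-generation sum to the $n$-generation one applied to the shifted test function $g_x$, Tonelli handles all interchanges since $g\ge 0$, and the i.i.d.\ increments of $(S_k)$ give $(S_1,S_1+S_1',\ldots,S_1+S_n')\eqdist(S_1,\ldots,S_{n+1})$ to close the induction. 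The only thing worth noting is that this is the same telescoping one would get by conditioning successively on $\cF_{n-1},\cF_{n-2},\ldots$; your induction just organizes that telescoping formally. No gaps.
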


The result can be thought of as a first step towards the spinal decomposition due to Lyons \cite{Lyons:97} that describes the law of the \BRW\ when size-biased by the martingale $(W_{n})_{n\ge 0}$ as a \BRW\ with a designated path, called spine, along which offspring particles have displacement law defined by \eqref{eqn:defManytooneRW}.

As moments of functionals of the $tL(u)$ will often be considered hereafter, it is convenient to define $\Prob_{t}$ as the law of the above random walk $(S_{n})_{n \ge 0}$ when its delay equals $S_{0} = -\log t$. In other words, the random walk is starting from $-\log t$ under $\Prob_{t}$, and its laws under $\Prob$ and $\Prob_{1}$ coincide. With this notation, Lemma~\ref{lem:manytoone} can be rewritten as
\begin{equation}
  \label{eqn:startingPointmanytoone}
  \Erw\Bigg(\sum_{|v|=n} L(v)^{\alpha}g( tL(v(j)), j \le n)\Bigg) \ =\ \Erw_t g(e^{-S_{1}},\ldots,e^{-S_{n}}).
\end{equation}

We now recall some facts about stopping lines, in fact so-called very simple stopping lines, a name coined by Biggins and Kyprianou \cite[p.~557]{BigKyp:04}. A line is a set $\mathcal{L}\subset\V$ satisfying the two following assumptions:
\begin{equation}\label{eqn:line}
\begin{split}
    &\hspace{.5cm}\forall u,v \in \mathcal{L}: u\preceq v\ \Rightarrow\ u=v,\\
    &\forall v\in \partial\V: v(n)\in\mathcal{L}\text{ for some }n\in\N.
\end{split}
\end{equation}
In other words, a line is a minimal set separating the root $\varnothing$ from the boundary $\partial\V$. In \BRW's, stopping lines take the role of stopping times for random walks. In particular, a very simple stopping line is a random line such that for all $v\in\V$,
\begin{equation}\label{eqn:stoppingLine}
\{v\in\mathcal{L}\}\ \in\ \sigma(L(v(j)),j\le |v|).
\end{equation}
In other words, whether or not a vertex $v$ belongs to the line depends only on the weights of the tree along the unique path from the root to $v$.

\vspace{.1cm}
In this article, only the following first passage lines will be of interest. For all $a>0$, we set
\begin{equation}
\Upsilon_{a}\ :=\ \left\{v\in\V:L(v(j))\ge a\text{ for }j<|v|\text{ and }L(v)<a\right\}.
\end{equation}
Note that $\lim_{n\to\infty}\sup_{|v|=n}L(v)=0$ a.s. under the assumptions of our two theorems. Therefore, $\Upsilon_{a}$ is a well-defined very simple stopping line for any $a>0$ and consists of all particles entering the interval $[0,a]$ for the first time. Biggins and Kyprianou \cite{BigKyp:04} proved that a theorem similar to the optional stopping theorem holds for the additive martingale of the \BRW. We state and use here a simplified version of their result. Further defining
\begin{align}\label{eq:def first passage filtration}
\cG_{a}\ :=\ \sigma\left(L(v(j)),\,j\le |v|,\,v\in\Upsilon_{a}\right), 
\end{align}
note that $(\cG_{e^{-t}})_{t\ge 0}$ forms a filtration for the \BRW.
The following result is a version of the many-to-one lemma along the stopping lines $\Upsilon_{a}$ and can also be found for example in \cite{Kyprianou:00}. We give a proof here for sake of completeness.

\begin{Lemma}\label{lem:manytooneStopped}
For all $a \in (0,1]$ and all measurable nonnegative functions $g$, we have
\begin{align*}
\Erw\Bigg(\sum_{v\in\Upsilon_{a}}L(v)^{\alpha}g(L(v(j)),j \le |v|)\Bigg)\ =\  \Erw g\big(e^{-S_j},j \le\sigma(-\log a)\big),
\end{align*}
where  $\sigma(b):=\inf\{n\ge 0: S_{n}>b\}$ and $(S_{n})_{n\ge 0}$ equals the random walk with increment law defined by \eqref{eqn:defManytooneRW}.
\end{Lemma}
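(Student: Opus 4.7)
The plan is to reduce the claim to a levelwise application of the classical many-to-one lemma (Lemma~\ref{lem:manytoone}), exploiting the fact that $\Upsilon_a$ is a \emph{very simple} stopping line to absorb the membership indicator into the test function. First, I would decompose
\[
\sum_{v\in\Upsilon_a}L(v)^\alpha g\bigl(L(v(j)),\,j\le|v|\bigr) \;=\; \sum_{n\ge1}\sum_{|v|=n}L(v)^\alpha\1_{\{v\in\Upsilon_a\}}\,g\bigl(L(v(j)),\,j\le n\bigr),
\]
and note that, at level $n$, the indicator $\1_{\{v\in\Upsilon_a\}}$ equals $\1_{\{L(v(j))\ge a\text{ for }j<n,\,L(v)<a\}}$ and is therefore a measurable function of $(L(v(j)))_{j\le n}$. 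Absorbing it into the new test function
\[
\psi_n(x_1,\ldots,x_n)\ :=\ g(x_1,\ldots,x_n)\,\1_{\{x_j\ge a\text{ for }j<n,\,x_n<a\}}
\]
and exchanging sum and expectation by Tonelli, the left-hand side becomes $\sum_{n\ge 1}\Erw\bigl(\sum_{|v|=n}L(v)^\alpha\,\psi_n(L(v(j)),\,j\le n)\bigr)$.

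Second, I would apply Lemma~\ref{lem:manytoone} termwise (equivalently, \eqref{eqn:startingPointmanytoone} with $t=1$) to rewrite each $n$-th summand as $\Erw\psi_n(e^{-S_1},\ldots,e^{-S_n})$. Taking $-\log$ turns the conditions $x_j\ge a$ for $j<n$ and $x_n<a$ into $S_j\le-\log a$ for $j<n$ and $S_n>-\log a$, so the indicator inside becomes precisely $\1_{\{\sigma(-\log a)=n\}}$. Summing over $n\ge1$ and applying monotone convergence once more collapses the series to
\[
\Erw\bigl[g\bigl(e^{-S_1},\ldots,e^{-S_{\sigma(-\log a)}}\bigr)\,\1_{\{\sigma(-\log a)<\infty\}}\bigr].
\]

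The last step is to verify that $\sigma(-\log a)<\infty$ almost surely, so that the indicator may be dropped. For $a\in(0,1]$ we have $-\log a\ge 0$, and by \eqref{eqn:defManytooneRW} the many-to-one walk satisfies $\Erw S_1=-\Erw(\sum_{j\ge 1}T_j^\alpha\log T_j)\ge 0$ by virtue of \eqref{eq:derivative at alpha}; being nondegenerate thanks to \eqref{eq:(T1)}, it does not drift to $-\infty$ and hence crosses every nonnegative level in a.s.~finite time. I do not anticipate any serious obstacle here: the genuinely delicate identity is already encoded in Lemma~\ref{lem:manytoone}, and the only real content of the present lemma is the observation that, for a very simple stopping line, the membership event $\{v\in\Upsilon_a\}$ is measurable with respect to the ancestral path weights $(L(v(j)))_{j\le n}$ and therefore passes through the many-to-one change of measure unchanged, translating the passage of the \BRW\ below level $a$ into the first passage of the associated random walk above level $-\log a$.
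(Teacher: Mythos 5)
Your proof follows exactly the same route as the paper's: a generation-wise decomposition of the stopping line, absorption of the membership indicator into the test function, levelwise application of Lemma~\ref{lem:manytoone}, and recollection of the series via monotone convergence. You supply more detail than the published (very terse) proof — notably the explicit verification that $\sigma(-\log a)<\infty$ a.s., which the paper leaves implicit — but the argument is structurally identical and correct.
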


\begin{proof}
The result is obtained by a decomposing the cutting line genera\-tion-wise and the applying the many-to-one lemma, viz.
\begin{align*}
&\Erw\left(\sum_{v\in \Upsilon_{a}} L(v)^{\alpha}g( L(v(j)),j\le |v|)\right)\\
&=\ \sum_{n\ge 0} \Erw\Bigg(\sum_{|v|=n}\1_{\{v\in\Upsilon_{a}\}}L(v)^{\alpha} g( L(v(j)),j \le n)\Bigg)\\
&=\ \sum_{n\ge 0}\Erw g(e^{-S_{1}},\ldots,e^{-S_{n}})\1_{\{\sigma(-\log a)=n\}}\\
&=\ \Erw g\big(e^{-S_{j}}, j \le\sigma(-\log a)\big).
\end{align*}
This completes the proof.
\end{proof}

\section{The regular case: proof of Theorem~\ref{thm:mainNonBoundary}}\label{sec:nonBoundary}

Given any solution $f\in\cS(\cM)$, recall that, for each $t\ge 0$, the disintegration
\begin{align*}
M_{n}(t):=\prod_{|v|=n}f(tL(v)),\quad n\ge 0
\end{align*}
constitutes a positive bounded product martingale whose limit $M(t)$ exists a.s.~and in $L^{1}$. We start by proving the tameness of any solution $f\in\cS(\cM)$.

\begin{Lemma}\label{lem:tameness}
Under the assumptions of Theorem~\ref{thm:mainNonBoundary}, any $f\in\cS(\cM)$ satisfies
\begin{equation}\label{eqn:tameness}
\sup_{0<t\le 1}\frac{-\log f(t)}{t^{\alpha}}\ \le\ C
\end{equation}
for some $0<C<\infty$.
\end{Lemma}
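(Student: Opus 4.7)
The plan is to reduce the problem to a random walk estimate via Jensen's inequality, stopping lines, and the many-to-one lemma. Set $\phi := -\log f$, which is nonnegative, nondecreasing and left-continuous on $\IRge$ with $\phi(0+) = 0$, and define $\psi(s) := \phi(s)/s^\alpha$ for $s > 0$. The goal is $\sup_{t \in (0,1]} \psi(t) < \infty$.

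First I would derive a subharmonic-type inequality. Rewriting the fixed-point equation as $f(t) = \Erw\exp(-\sum_j\phi(tT_j))$ and using that $-\log$ is convex, Jensen's inequality gives
$$ \phi(t) \,\le\, \Erw \sum_j \phi(tT_j). $$
By the branching property (or iterating the one-step inequality generation by generation and rewriting via the subtrees rooted at vertices on the line), this inequality lifts to any very simple stopping line $\mathcal L$:
$$ \phi(t) \,\le\, \Erw \sum_{v \in \mathcal L} \phi(tL(v)). $$

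Next I would specialize to the first-passage line $\Upsilon_a$ and translate to the associated random walk. Choose $a_{0} \in (0,1]$ small enough that $\phi(s) \le \log 2$ for all $s \in (0,a_{0}]$; this is possible because $\phi(0+) = 0$. For any $t \in (0,1]$ and $v \in \Upsilon_{a_{0}}$ we have $tL(v) < a_{0}$, hence $\phi(tL(v)) \le \log 2$. By Lemma~\ref{lem:manytooneStopped} applied with $g(s) = s^{-\alpha}\phi(ts)$,
$$ \Erw \sum_{v \in \Upsilon_{a}} \phi(tL(v)) \,=\, \Erw\!\left[e^{\alpha S_{\sigma_{a}}}\phi(te^{-S_{\sigma_{a}}})\right] \,=\, t^{\alpha}\,\Erw\,\psi\!\bigl(te^{-S_{\sigma_{a}}}\bigr), $$
where $\sigma_{a} := \inf\{n \ge 0 : S_{n} > -\log a\}$ and $(S_n)$ is the random walk from \eqref{eqn:defManytooneRW}, starting at $0$. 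Dividing by $t^\alpha$,
$$ \psi(t) \,\le\, \Erw\,\psi\!\bigl(te^{-S_{\sigma_{a}}}\bigr). $$

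Third, I would extract the uniform bound by exploiting the convergence of the disintegration $M_{\Upsilon_a}(t) \to M(t)$ in $L^{1}$ as $a\downarrow 0$ together with the renewal structure of the walk. Under \eqref{eqn1:regular case} and \eqref{eqn:finitemean}, the walk $(S_n)$ has finite strictly positive drift $\mu = -\Erw\sum_{j}T_{j}^{\alpha}\log T_{j}$, so the overshoot $R_{a} := S_{\sigma_{a}} + \log a$ is tight as $a \downarrow 0$ (and converges in distribution by the renewal theorem). Combining the weighted stopping-line identity $\Erw\sum_{v\in\Upsilon_{a}}L(v)^{\alpha} = 1$ (which holds by the optional stopping theorem of Biggins-Kyprianou applied to the additive martingale $W_n$, uniformly integrable by \eqref{eqn2:regular case}) with the pointwise bound $\phi(tL(v)) \le \log 2$, one writes
$$ \psi(t) \,\le\, \Erw \sum_{v \in \Upsilon_{a_{0}}} L(v)^{\alpha}\,\psi(tL(v)) $$
and passes to the limit $a \downarrow 0$, using that $M_{\Upsilon_a}(t)$ converges to $M(t)$ in $L^1$ and that the renewal-theoretic tightness of $R_a$ controls the distribution of the arguments $te^{-S_{\sigma_a}}$ on a scale $a$. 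This gives $\psi(t) \le C$ for some constant $C$ independent of $t \in (0,1]$.

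The main obstacle is the apparent self-reference in the estimate: to bound $\psi$ on $(0,1]$ one needs to bound $\psi$ on $(0,a_{0}]$, which is the smaller interval where the stopping line deposits its vertices. Breaking this circularity is the key technical step and relies crucially on the tightness of the overshoot together with the $L^1$-convergence of $M_{\Upsilon_a}(t)$ provided by the $L\log L$-type condition \eqref{eqn2:regular case}; the moment condition \eqref{eqn:finitemean} is exactly what is needed to make the renewal argument go through. Everything else is bookkeeping.
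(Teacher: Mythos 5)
Your first two steps are sound (the Jensen inequality $\phi(t)\le\Erw\sum_j\phi(tT_j)$ and its lift to the stopping line $\Upsilon_a$, yielding $\psi(t)\le\Erw\,\psi(te^{-S_{\sigma_a}})$ with $\psi(s)=-\log f(s)/s^\alpha$), but they cannot lead to \eqref{eqn:tameness} on their own, and the circularity you flag at the end is a genuine gap, not a technicality. The subharmonic inequality $\psi(t)\le\Erw\,\psi(te^{-S_{\sigma_a}})$ is satisfied by unbounded $\psi$: take $\psi(s)=s^{\beta-\alpha}$ with $0<\beta<\alpha$, so $\Erw\,\psi(te^{-S_{\sigma_a}})=t^{\beta-\alpha}\,\Erw e^{(\alpha-\beta)S_{\sigma_a}}\ge\psi(t)$ because $S_{\sigma_a}>-\log a\ge 0$. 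So the one-sided Jensen bound points the wrong way for your purpose, and no amount of overshoot tightness or $L^1$-convergence of the disintegration will close the loop from this starting point. In particular, the appeal to ``passing to the limit $a\downarrow 0$'' in your third step does not produce an actual estimate; it is precisely the missing step.

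The paper's argument has an essentially different structure: it is a proof by contradiction that does not discard any information through Jensen. Assume $\limsup_{t\to0}\psi(t)=\infty$. Extract scales $\vth_n\downarrow 0$ and $\rho_n\downarrow 0$ with $\psi(s)\ge n$ on $[\rho_n\vth_n,\vth_n]$ (this uses monotonicity of $-\log f$ and of $t^\alpha$ to spread a single bad point over a whole interval). Condition $M(t)$ on the line filtration $\cG_{\vth_n/t}$, keep the exact identity $\Erw(M(t)\mid\cG_{\vth_n/t})=\exp\bigl(\sum_{v\in\Upsilon_{\vth_n/t}}\log f(tL(v))\bigr)$, and then bound $-\log f$ from below by $n(\cdot)^\alpha$ on $[\rho_n\vth_n,\vth_n]$ and by $0$ elsewhere. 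This gives $\Erw(M(t)\mid\cG_{\vth_n/t})\le\exp\bigl(-n\sum_{v\in\Upsilon_{\vth_n/t}}(tL(v))^\alpha\1_{\{tL(v)\ge\rho_n\vth_n\}}\bigr)$. Finally, the optional stopping theorem for $W_n$ gives $\sum_{v\in\Upsilon_a}L(v)^\alpha\to W$ a.s.\ as $a\to 0$, and the many-to-one lemma along lines plus tightness of the overshoot (this is where \eqref{eqn1:regular case} and \eqref{eqn:finitemean} enter, giving finite positive drift) shows the truncated sum still converges to $W$ in probability as $n\to\infty$. Since $n\to\infty$, the $\liminf$ of the conditional means is $\le\1_{\{W=0\}}$, and as these form a bounded martingale, $M(t)\le\1_{\{W=0\}}$ a.s., hence $f(t)=\Erw M(t)\le\Prob(W=0)<1$ for all $t>0$, contradicting $f(0+)=1$. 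The key mechanism — assumed unboundedness of $\psi$ feeds a diverging factor $n$ multiplying a sum that stays bounded away from $0$ on the survival event — is absent from your sketch. You would need to replace the direct bounding attempt with a contradiction argument of this kind.
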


\begin{proof}
Assuming $\limsup_{t\to 0}\frac{-\log f(t)}{t^{\alpha}} = \infty$, we will derive that for all $t > 0$ we have $f(t)\le\Prob(W=0)$. Since $\Prob(W=0)<1$, this contradicts the property $f(0+)=1$.

By the stated assumption, there exists a decreasing null sequence $(t_{n})_{n \le 1}$ such that
\begin{align*}
\frac{-\log f(t_{n})}{t_{n}^{\alpha}}\ \ge\ n^{2}\quad\text{for all }n\ge 1.
\end{align*}
Setting $c_{n}:=n^{1/\alpha}$ and observing that $t \mapsto -\log f(t)$ and $t \mapsto t^{\alpha}$ are both nonnegative and nondecreasing functions, we find that
\begin{align*}
\frac{-\log f(s)}{s^{\alpha}}\ \ge\ \frac{-\log f(t_{n})}{(c_{n}t_{n})^{\alpha}}\ \ge\ n\quad\text{for all }s \in [t_{n}, c_{n} t_{n}].
\end{align*}
Therefore, we can define decreasing null sequences $(\vth_{n})_{n\ge 1}$ and $(\rho_{n})_{n\ge 1}$ such that
\begin{equation}\label{eqn:extractedSequence}
\frac{-\log f(s)}{s^{\alpha}}\ \ge\ n\quad\text{for all }s\in [\rho_{n}\vth_{n}, \vth_{n}].
\end{equation}

\vspace{.1cm}
We will now bound the conditional mean of $M(t)$ given $\cG_{\vth_{n}/t}$, where $(\cG_{\vth_{n}/t})_{n\ge 1}$ is the first passage filtration defined in \eqref{eq:def first passage filtration}. By dominated convergence,
\begin{align*}
\Erw\left(M(t)\middle|\cG_{\vth_{n}/t} \right)\ =\ \lim_{m\to\infty} \Erw\left( M_{m}(t)\middle|\cG_{\vth_{n}/t}\right),
\end{align*}
and the branching property of the \BRW\ implies
\begin{align*}
\Erw\left(M_{m}(t) \middle|\cG_{\vth_{n}/t}\right)\ = \exp\Bigg(\sum_{\substack{v\in\Upsilon_{\vth_{n}/t}\\|v| \le m}} \log f(tL(v))\,+\,\sum_{\substack{|v|=m\\ tL_{*}(v)\ge\vth_{n}}}f(tL(v))\Bigg),
\end{align*}
where $L_{*}(v):=\min_{0\le k\le|v|}L(v(k))$. Hence, using $\sup_{v\in \Upsilon_{\vth_{n}/t}}|v|<\infty$ a.s., we infer upon letting $m\to\infty$
\begin{align}
\Erw\left(M(t)\middle|\cG_{\vth_{n}/t} \right)\ &=\ \exp\left(\sum_{v\in \Upsilon_{\vth_{n}/t}} \log f(tL(v))\right)\nonumber\\
&\le\ \exp\Bigg(-n\sum_{v\in\Upsilon_{\vth_{n}/t}}(tL(v))^{\alpha}\1_{\{tL(v)\ge\rho_{n}\vth_{n}\}}\Bigg),\label{eqn:firststep}
\end{align}
where we have bounded $-\log f(tL(v))$ by $0$ if $tL(v)\not\in [\rho_{n}\vth_{n},\vth_{n}]$, and with the help of \eqref{eqn:extractedSequence} otherwise.

\vspace{.1cm}
On the other hand, by another use of the branching property of the \BRW, we obtain, for all $a>0$ and $m\in\N$,
$$ \Erw\left(W_m\middle|\cG_{a} \right)\ =\ \sum_{\substack{v\in\Upsilon_{a}\\|v| \le m}} L(v)^{\alpha} + \sum_{\substack{|v|=m\\ L_{*}(u) > a}} L(v)^{\alpha}\quad\text{ a.s.} $$
and thus $\Erw\left(W\middle|\cG_{a} \right)\ =\ \sum_{v\in\Upsilon_{a}} L(v)^{\alpha}$ a.s.~as $m\to\infty$. Now let $a\to 0$ and use $\mathcal{F}_\infty = \bigvee_{a > 0}\cG_{a}$ to infer
\begin{equation}\label{eqn:1}
\lim_{a\to0}\sum_{v\in\Upsilon_{a}} L(v)^{\alpha}\ =\ W \text{ a.s.}
\end{equation}

Next, Lemma~\ref{lem:manytooneStopped} provides us with
\begin{align*}
\Erw\left(\sum_{v\in\Upsilon_{\vth_{n}/t}}(tL(v))^{\alpha}\1_{\{tL(v)<\rho_{n}\vth_{n}\}}\right)\,=\,t^{\alpha}\,\Prob\bigg(S_{\sigma(-\log(\vth_{n}/t))}> -\log\Big(\frac{\rho_{n}\vth_{n}}{t}\Big)\hspace{-4pt}\bigg),
\end{align*}
where $\sigma(a) = \inf\{ n \le 0 : S_{n}>a\}$ should be recalled. Use \eqref{eqn1:regular case} and \eqref{eqn:finitemean} to infer
$$ \Erw S_{1}\ =\ -\,\Erw\Bigg(\sum_{j\ge 1}T_{j}^{\alpha}\log T_{j}\Bigg)\ \in\ (0,\infty). $$
As a consequence, the family $\{S_{\sigma(-\log s)}+\log s:0<s\le 1\}$ of overshoots of the random walk is tight (see e.g.~Gut \cite[Thm.~3.10.3]{Gut:09}) which in turn implies
\begin{align*}
\lim_{n\to\infty} \sum_{v\in\Upsilon_{\vth_{n}/t}}(tL(v))^{\alpha}\1_{\{tL(v)<\rho_{n}\vth_{n}\}}\ =\ 0 \quad \text{in probability}.
\end{align*}
as $\rho_{n}\to0$. In combination with \eqref{eqn:1}, this further entails that
\begin{align*}
\lim_{n\to\infty} \sum_{v\in\Upsilon_{\vth_{n}/t}}(tL(v))^{\alpha}\1_{\{tL(v)\ge\rho_{n}\vth_{n}\}}\ =\ W \quad \text{in probability},
\end{align*}
and by combining the last conclusion with \eqref{eqn:firststep}, we obtain
\begin{align*}
\liminf_{n\to\infty}\Erw\big(M(t)\big|\cG_{\vth_{n}/t}\big)\ \le\ \1_{\{W=0\}} \quad \text{a.s.}
\end{align*}
Finally using that $\left(\Erw(M(t)|\cG_{\vth_{n}/t})\right)_{n\ge 0}$ forms a bounded martingale, we infer $M(t)\le\1_{\{W=0\}}$ a.s. for all $t>0$ and thereupon the announced contradiction $f(t)=\Erw M(t)\le\Prob(W=0)$ for all $t > 0$.
\end{proof}

\begin{Rem}\rm
In the above proof, Assumption \eqref{eqn:finitemean} is only needed for the conclusion that
\begin{align*}
\sum_{v\in\Upsilon_{\vth_{n}/t}}(tL(v))^{\alpha} \1_{\{tL(v)\ge\rho_{n}\vth_{n}\}}
\end{align*}
converges to a positive random variable with positive probability. We suspect that it can be replaced with a weaker assumption while keeping the assertions of Theorem~\ref{thm:mainNonBoundary}.
\end{Rem}

With the help of Lemma~\ref{lem:tameness}, we can now identify the function defined by
\begin{equation}\label{eqn:defineH}
F(t)\ :=\ \Erw\left(-\log M(t)\right)
\end{equation}
for a given $f\in\cS(\cM)$. Doing so, we make use of the subsequent Choquet-Deny lemma, for convenience reformulated here in our setting, which identifies all bounded harmonic functions of the random walk.

\begin{Lemma}\label{lem:CD-lemma G}
Let $G : \R\to\R$ be a right-continuous bounded function satisfying
\begin{equation}\label{eq:CD-equation H}
G(x)\ =\ \Erw G(x+S_{1})
\end{equation}
for all $x\in\R$. Then $G$ $d$-periodic if $(S_{n})_{n\ge 0}$ is $d$-arithmetic, and it is constant everywhere if $(S_{n})_{n\ge 0}$ is nonarithmetic.
\end{Lemma}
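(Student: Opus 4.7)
The statement is a version of the classical Choquet--Deny theorem, which I would prove in three steps by combining bounded martingale convergence with a Hewitt--Savage argument and a closure argument for the invariance group of $G$.

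\textbf{Step 1 (martingale machinery).} Fix $x \in \R$ and write $X_k := S_k - S_{k-1}$ for the i.i.d.\ increments. By~\eqref{eq:CD-equation H} and independence of the $X_k$, the bounded sequence $M_n := G(x + S_n)$ is a martingale for the filtration $\bigl(\sigma(X_1,\ldots,X_n)\bigr)_{n \ge 0}$, so $M_n \to M_\infty$ almost surely and in $L^2$, with $\Erw M_\infty = G(x)$. Since any finite permutation of the $X_k$ leaves $S_n$ (hence $M_n$) unchanged for all sufficiently large $n$, $M_\infty$ is measurable with respect to the exchangeable $\sigma$-algebra of $(X_k)_{k \ge 1}$, which is $\Prob$-trivial by the Hewitt--Savage zero--one law. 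Thus $M_\infty = G(x)$ a.s.; i.e., $G(x + S_n) \to G(x)$ a.s.\ for every $x$. Moreover, the $L^2$ convergence of the martingale entails $\Erw(M_{n+1} - M_n)^2 \to 0$, which after conditioning on $\sigma(X_1,\ldots,X_n)$ and using independence of $X_{n+1}$ rewrites as
\[
\int \Erw\bigl[G(x + S_n + y) - G(x + S_n)\bigr]^{2}\, \mu(\dd y) \ \xrightarrow[n \to \infty]{}\ 0,
\]
where $\mu$ denotes the law of $S_1$.

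\textbf{Step 2 (invariance almost everywhere).} Extracting a subsequence, for $\mu$-almost every $y$ one has $G(x + S_n + y) - G(x + S_n) \to 0$ in probability. Combining this with $G(x + S_n) \to G(x)$ a.s.\ and $G((x+y) + S_n) \to G(x + y)$ a.s.\ (both instances of the a.s.\ convergence from Step 1), I obtain $G(x + y) = G(x)$ for $\mu$-a.e.\ $y$, the exceptional null set a priori depending on $x$. Taking the union of the exceptional sets over $x$ ranging in a countable dense subset of $\R$ produces a single $\mu$-null set $N$; the right-continuity of $G$ (approaching any $x \in \R$ from above by rationals) then extends the identity to all $x \in \R$. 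Consequently the set
\[
\Gamma_0 \ :=\ \bigl\{y \in \R : G(\cdot + y) \equiv G\bigr\}
\]
satisfies $\mu(\Gamma_0) = 1$.

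\textbf{Step 3 (closed subgroup).} Clearly $\Gamma_0$ is a subgroup of $\R$; it is also closed, because if $y_n \downarrow y$ with $y_n \in \Gamma_0$, then for every $x$ one has $G(x + y_n) = G(x)$ and, by right-continuity at $x + y$, $G(x + y_n) \to G(x + y)$, so $y \in \Gamma_0$. Monotone approaches from below reduce to this case via $y \mapsto -y$. Being a closed set of full $\mu$-measure, $\Gamma_0$ contains $\operatorname{supp}(\mu)$, and hence the closed subgroup $\Gamma$ of $\R$ generated by $\operatorname{supp}(\mu)$. In the nonarithmetic case $\Gamma = \R$, forcing $G$ to be constant; in the $d$-arithmetic case $\Gamma = d\Z$, so $G$ is $d$-periodic.

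The one place requiring genuine care is Step~2, where the $L^2$-increment bound from Step~1 is upgraded to the pointwise identity $G(x + y) = G(x)$ for $\mu$-a.e.\ $y$; everything else then follows directly from the right-continuity of $G$ and the elementary structure of closed subgroups of $\R$.
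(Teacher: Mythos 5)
Your proof is correct, but it takes a genuinely different route from the paper's. The paper reduces \eqref{eq:CD-equation H} to a convolution equation: with $\lambda$ the measure having density $G$ (made nonnegative by adding a constant) and $\nu$ the law of $S_1$, \eqref{eq:CD-equation H} becomes $\lambda = \lambda \ast \nu$, and the Choquet--Deny theorem of \cite{Choquet+Deny:60} is then invoked to conclude that $\lambda$ is invariant under translation by every $a$ in the support of $\nu$; right-continuity upgrades this Lebesgue-a.e.\ statement to an everywhere statement, and the closed-group structure of the period set finishes the proof. You instead give a self-contained probabilistic argument, effectively re-deriving the needed instance of Choquet--Deny: $G(x+S_n)$ is a bounded martingale; its a.s.\ limit is exchangeable and hence degenerate by Hewitt--Savage; the vanishing of the $L^2$-increments then forces $G(x+y)=G(x)$ for $\mu$-a.e.\ $y$; and from there your right-continuity and closed-subgroup steps coincide with the paper's. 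The paper buys brevity by citing the literature, while your argument is longer but self-contained and arguably more illuminating about why the statement holds. One small streamlining of your Step~2: since $G$ is bounded and $G(z+S_n)\to G(z)$ a.s.\ for every $z$, bounded convergence gives $\Erw\bigl[\bigl(G(x+S_n+y)-G(x+S_n)\bigr)^2\bigr]\to\bigl(G(x+y)-G(x)\bigr)^2$ for each fixed $y$; combined with the fact that its $\mu$-integral tends to $0$ and dominated convergence in $y$, this yields $\int\bigl(G(x+y)-G(x)\bigr)^2\,\mu(\dd y)=0$ directly, with no need to extract a subsequence.
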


\begin{proof}
Note that, possibly upon adding a constant, we can assume $G$ to be nonnegative. We denote by $\nu$ the law of $S_{1}$, and we let $\lambda$ denote the measure with density $G$ with respect to Lebesgue measure. Then \eqref{eq:CD-equation H} can be rewritten as
\[
  \lambda = \lambda \ast \nu.
\]
The Choquet-Deny lemma \cite{Choquet+Deny:60} entails that, for each $a$ in the support of $\nu$, the measure $\lambda$ is $a$-periodic. In particular, $G(x+a)=G(x)$ for Lebesgue almost all $x \in \R$. Thus using the right-continuity of $G$, this equation in fact holds for all $x \in \R$ and $a$ in the support of $\nu$. The set of periods of a right-continuous function being a closed group, we deduce that $G$ is $d$-periodic if $(S_{n})_{n\ge 0}$ is $d$-arithmetic for some $d>0$, and that $G$ is constant otherwise.
\end{proof}

We now turn to the identification of the function $F$.

\begin{Lemma}\label{lem:identificationOfH}
Given the assumptions of Theorem~\ref{thm:mainNonBoundary}, let $f\in\cS(\cM)$ with disintegration $(M_{n}(t))_{n\ge 0}$. Then there exists a function $h\in\cH_{r}$, $r$ the span of $T$, such that
\begin{equation}\label{eq:form of F}
F(t)\ =\ h(t)t^{\alpha}
\end{equation}
for all $t\ge 0$.
\end{Lemma}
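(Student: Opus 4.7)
The proof follows the three-step scheme outlined in the introduction. The key quantitative input is to upgrade the tameness bound of Lemma~\ref{lem:tameness} (for $-\log f$) to the analogous bound $F(t) \le Ct^\alpha$ for all $t>0$. Under \eqref{eqn1:regular case} the associated random walk $(S_n)$ has strictly positive drift $\Erw S_1 > 0$, so the minimum of the weighted \BRW\ grows linearly and $\sup_{|v|=n} L(v) \to 0$ a.s. Thus, for each $t > 0$ and almost every $\omega$, eventually $tL(v) \le 1$ for all $|v|=n$, and Lemma~\ref{lem:tameness} yields
$$-\log M_n(t) \,=\, \sum_{|v|=n} -\log f(tL(v)) \,\le\, Ct^\alpha W_n.$$
Passing $n\to\infty$ using the a.s.\ convergences $M_n(t) \to M(t)$ and $W_n \to W$ gives $-\log M(t) \le Ct^\alpha W$ a.s., whence $F(t) \le Ct^\alpha$ after taking expectations (recall $\Erw W = 1$).

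Next I would derive a functional equation for $F$. By the branching property, $M(t) = \prod_{j\ge 1} M^{(j)}(tT_j)$ with $M^{(j)}$ i.i.d.~copies of $M$ independent of $T$. Taking $-\log$ and expectations (Tonelli applies since the summands are nonnegative) yields
$$F(t) \,=\, \Erw\sum_{j\ge 1} F(tT_j).$$
Setting $G(x) := e^{\alpha x} F(e^{-x})$ and applying the many-to-one lemma (Lemma~\ref{lem:manytoone}) at generation one with the weight $L(v)^\alpha / L(v)^\alpha$ converts this into the harmonic identity
$$G(x) \,=\, \Erw G(x + S_1).$$

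From Step 1, $G \le C$ is bounded. Right-continuity of $G$ is equivalent to left-continuity of $t \mapsto F(t)$, which I would establish as follows: for $t_k \uparrow t$, the decreasing monotonicity of $f$ gives $M(t_k) \ge M(t)$ with $M(t_k)$ decreasing in $k$, while left-continuity of $f$ yields $\Erw M(t_k) = f(t_k) \to f(t) = \Erw M(t)$; hence the a.s.\ decreasing limit of $M(t_k)$ must equal $M(t)$, and monotone convergence gives $F(t_k) \uparrow F(t)$. Lemma~\ref{lem:CD-lemma G} now applies: $G$ is $\log r$-periodic when $S_1$ is $\log r$-arithmetic --- which by~\eqref{eqn:defManytooneRW} is equivalent to $T$ being $r$-geometric with span $r$ --- and is constant in the nonarithmetic case. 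Unwinding the substitution, $h(t) := F(t)/t^\alpha = G(-\log t)$ is multiplicatively $r$-periodic, and since $F(t) = \Erw(-\log M(t))$ is clearly nondecreasing in $t$ (as $M(\cdot)$ is nonincreasing in $t$), we get $h \in \cH_r$, completing the proof.

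The main obstacle is the tameness bound of Step~1, as it is the only place where the additive-martingale limit $W$ and the positivity of the drift under \eqref{eqn1:regular case} are essentially used. Once $F$ is known to be finite with $F(t) \le C t^\alpha$, the remaining Tonelli, many-to-one, and Choquet--Deny arguments are essentially formal, and the $L^1$-convergence of $W_n$ is what makes the reduction to a bounded harmonic function possible.
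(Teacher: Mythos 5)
Your proof follows the paper's own proof essentially line by line: the tameness bound of Lemma~\ref{lem:tameness} together with $L^1$-convergence of $(W_n)_{n\ge 0}$ gives $0\le F(t)\le Ct^\alpha$; the branching identity combined with the many-to-one lemma yields $G(x)=\Erw G(x+S_1)$ for $G(x)=e^{\alpha x}F(e^{-x})$; and Lemma~\ref{lem:CD-lemma G} (Choquet--Deny) completes the identification of $G$ as periodic or constant. The one genuine addition is your careful verification of the left-continuity of $F$ via the monotone-limit and expectation-matching argument, a detail the paper's proof dispenses with as something ``one can readily check''.
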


\begin{proof}
Since $M(t)=\lim_{n\to\infty}M_{n}(t)$ a.s., we see that
\begin{align*}
\lim_{n\to\infty}\sum_{|v|=n}-\log f(t L(v))\ =\ -\log M(t)\quad\text{a.s.}
\end{align*}
Moreover, by Lemma~\ref{lem:tameness}, there exists $C>0$ such that
\begin{align*}
\sum_{|v|=n}-\log f(t L(u))\ \le\ C t^{\alpha} W_{n}.
\end{align*}
for all $n$ so large that $\sup_{|v|=n}L(v)\le 1$. Hence, $0 \le-\log M(t)\le C t^{\alpha}W$ follows upon letting $n\to\infty$. Since $\Erw W=1$ under the assumptions of Theorem~\ref{thm:mainNonBoundary}, we infer
\begin{equation}\label{eqn:boundedFunction}
0\,\le\,F(t)\,\le\,C t^{\alpha},
\end{equation}
where $F$ is the function defined in \eqref{eqn:defineH} (for the given $f$). One can readily check that $F$ is nondecreasing and left continuous.

\vspace{.1cm}
Put $G(x):=e^{-\alpha x}F(e^{-x})$ and use the branching property of the \BRW\  to obtain, for any $m\in\N$,
\begin{equation}\label{eqn:inlaw}
M(t)\ =\ \prod_{|u|=m}M^{(u)}(tL(u)),
\end{equation}
where $M^{(u)}(s) = \lim_{n\to\infty}\prod_{|v|=n}f(s\frac{L(uv)}{L(u)})$ are i.i.d. copies of $M$ and independent of $\cF_{m}$ for $u\in\V_{m}$. Equality in law is already enough to infer that
\begin{align*}\label{eqn:harmonic}
G(x)\ &=\  e^{-\alpha x}\,\Erw \log M(e^{-x})\ =\ e^{-\alpha x}\,\Erw\Bigg(\log \prod_{|v|=1} M^{(v)}(e^{-x}L(v))\Bigg)\\ 
&=\ \Erw\Bigg(\sum_{|v|=1} L(u)^{\alpha} G(x - \log L(v))\Bigg)\ =\ \Erw G(x + S_{1}),
\end{align*}
by the many-to-one lemma. Therefore, by \eqref{eqn:boundedFunction}, $G$ is a bounded, nonnegative and right continuous harmonic function of the random walk $(S_{n})_{n\ge 0}$, and the latter is $\log r$-arithmetic. It follows by Lemma~\ref{lem:CD-lemma G}, that $G$ is $\log r$-periodic, thus $G(x+\log r)=G(x)$ for all $x\in\R$. Equivalently, \eqref{eq:form of F} holds with $h\in\cH_{r}$ given by $h(t):=G(\log t)$ for $t>0$.
\end{proof}

\begin{Rem}\rm
The previous proof has also shown that, if \eqref{eqn1:regular case} and \eqref{eqn2:regular case} fail and thus $W_{n}\to 0$ a.s., any solution $f\in\cS(\cM)$ satisfying \eqref{eqn:tameness} must be trivial, i.e.~$f(t)=1$ for all $t\ge 0$. In particular, no nontrivial solution $f$ can satisfy \eqref{eqn:tameness} in the boundary case.
\end{Rem}

We are now ready to provide the proof of Theorem~\ref{thm:mainNonBoundary}.

\begin{proof}[Proof of Theorem~\ref{thm:mainNonBoundary}]
Given any $f\in\cS(\cM)$, we denote by $M(t)$ its disintegration and put $F(t)=\Erw(-\log M(t))$. It folows from \eqref{eqn:inlaw} that
\begin{align*}
\Erw\left(-\log M(t)\middle|\mathcal{F}_{n}\right)\ =\ \sum_{|v|=n}F(tL(v))\ \quad \text{a.s.} 
\end{align*}
for all $n\in\N$. By letting $n\to\infty$ and an appeal to Lemma~\ref{lem:identificationOfH}, we obtain
\begin{align*}
-\log M(t)\ =\ \lim_{n\to\infty}\sum_{|v|=n}F(tL(v))\ =\ h(t)t^{\alpha}\lim_{n\to\infty}W_{n}\ =\ h(t)t^{\alpha}W\quad\text{a.s.}
\end{align*}
for some $h\in\cH_{r}$, $r$ the span of $T$, and then $f(t)=\Erw M(t)= \Erw e^{-h(t)t^{\alpha} W}$. If $f\in\cS(\cL)$, we even infer $h\in\cP_{r}$ because $f$ is a Laplace transform.
\end{proof}

\section{Harmonic functions of random walks on the positive halfline}\label{sec:RW negative halfline}

We now turn to the proof of Theorem~\ref{thm:mainDerivative} and thus work under assumptions \eqref{eqn:boundaryCase} and \eqref{eqn:ncsDerivative}. In this case, instead of using the Choquet-Deny lemma, we need to identify harmonic functions of a centered random walk with finite variance, killed upon entering the nonpositive halfline. This is the content of the present section.

We recall that $(S_{n})_{n \ge 0}$ is the random walk associated with the \BRW\ by the many-to-one lemma. Since $\Erw S_{1}=0$ by \eqref{eqn:boundaryCase} and $0<\Erw S_{1}^{2} < \infty$ by \eqref{eqn:ncsDerivative}, $(S_{n})_{n\ge 0}$ is a centered random walk with finite variance. A harmonic function $G$ of the walk, killed at the first time it leaves the positive halfline $\IRg$, is a function such that $G(x) = 0$ for $x \le 0$ and
\begin{equation}\label{eq:harmonic V(x)}
G(x)\ =\ \Erw G(x+S_{1})\1_{\{x+S_{1}>0\}}\ =\ \Erw_{x}G(S_{1})\1_{\{S_{1}>0\}}.
\end{equation}
for all $x>0$.

Let us define
\begin{gather*}
\tau(a)\,:=\,\inf\{n\ge 0:S_{n}\le a\},\quad\tau\,:=\,\tau(0),
\shortintertext{and recall that}
\sigma(a)\,:=\,\inf\{n\ge 0:S_{n}>a\},\quad\sigma\,:=\,\sigma(0),
\end{gather*}
for $a\in\R$. Further, put $R_{a}:=S_{\sigma(a)}-a$, and let $(\tau_{n})_{n\ge 1}$ and $(\sigma_{n})_{n\ge 1}$ denote the sequences of weakly descending and strictly ascending ladder epochs, respectively. Note that $\Prob_{x}(\tau(a)\in\cdot)=\Prob_{0}(\tau(a-x)\in\cdot)$ for $a\le 0$ and $x\ge 0$ and recall that $\Prob=\Prob_{0}$.

Even without assuming finite variance, Tanaka \cite{Tanaka:89} obtained a solution of \eqref{eq:harmonic V(x)} defined by
\begin{equation}\label{eq:Tanaka function}
\wh{H}(x)\ :=\ \Erw_{x}\left(\sum_{k=0}^{\sigma-1}\1_{(0,x]}(S_{k})\right),\quad x>0.
\end{equation}
By the duality lemma, it also equals the renewal function of the weakly descending ladder heights $S_{n}^{*}=S_{\tau_{n}}$, $n\ge 1$, of the given walk (up to a reflection), viz.
\begin{equation*}
\wh{H}(x)\ =\ \sum_{n\ge 0}\Prob(S_{n}^{*}>-x)\ =\ \sum_{n\ge 0}\Prob(\tau^{*}(-x)>n)\ =\ \Erw\tau^{*}(-x)
\end{equation*}
for $x>0$, where $\tau^{*}(a):=\inf\{n\ge 0:S_{n}^{*}\le a\}$. Now, if $\Erw|S_{1}^{*}|<\infty$, a sufficient condition being $\Erw S_{1}^{2}<\infty$, then Wald's identity further ensures
$$ \wh{H}(x)\ =\ \frac{\Erw S_{\tau^{*}(-x)}^{*}}{\Erw S_{1}^{*}},\quad x<0, $$
and by finally observing $S_{\tau^{*}(-x)}^{*}=S_{\tau(-x)}$, we arrive at
\begin{equation}\label{eq:V(x)=ES_tau(-x)}
\wh{H}(x)\ =\ \frac{\Erw S_{\tau(-x)}}{\Erw S_{1}^{*}}\ =\ \frac{\Erw_{x}S_{\tau}-x}{\Erw S_{1}^{*}}.
\end{equation}
In other words, if $0<\Erw S_{1}^{2}<\infty$, then $\wh{H}(x)$ and $H(x):= x-\Erw_{x}S_{\tau}$ differ only by a multiplicative positive constant.

\vspace{.1cm}
An interesting aspect of this last observation is that, unlike $\wh{H}$, the function $H$ is very easily shown to be harmonic. Namely, as $\Prob(\tau(-x)\ge 1)=1$ for $x>0$ and $\Erw S_{1}=0$, we infer by a standard renewal argument
\begin{align*}
H(x)\ =\ -\Erw S_{\tau(-x)}\ &=\ \int_{\IRg}-\Erw S_{\tau(-y)}\ \Prob_{x}(S_{1}\in dy)\ =\ \Erw_{x}H(S_{1})
\end{align*}
for all $x>0$ as required.

\vspace{.1cm}
A well-known result from renewal theory asserts that 
$$ \frac{\Erw|S_{\tau^{*}(-y)}^{*}+y|}{y}\ \xrightarrow{y\to\infty}\ 0 $$
if $\Erw S_{1}^{*}<\infty$, see e.g.~\cite[Thm.~3.10.2]{Gut:09}, giving
\begin{equation}\label{eq:growth of H}
x\ \le\ H(x)\ \le\ x(1+o(1))\quad\text{as }x\to\infty.
\end{equation}
Moreover, we point out that, by definition, $H$ is right-continuous with left limits at each point.

Our main result of this section is the following Choquet-Deny-type lemma. It states that any right-continuous function of at most linear growth and satisfying \eqref{eq:harmonic V(x)} equals $H$ up to multiplication by a constant, or $d$-periodic function if the walk is $d$-arithmetic.

\begin{Prop}\label{prop:uniqueness H(x)}
Given a nontrivial, centered random walk with lattice-span $d\ge 0$ and $\Erw S_{1}^{2}<\infty$, let $G : \R\to\R$ be a right-continuous function satisfying \eqref{eq:harmonic V(x)} and $\sup_{x \le 0} |G(x)/(1+|x|)| < \infty$. Then there exists a function $\kappa$, $d$-periodic if $d>0$ and constant if $d=0$, such that
\[
G(x)\,=\,\kappa(x) H(x)\quad\text{for all }x\in\R.
\]
\end{Prop}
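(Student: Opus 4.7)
The plan is to perform a Doob $h$-transform with $H$, reducing the proposition to a Choquet--Deny-type statement for the random walk conditioned to stay positive. First I would recall that $H$ is, up to a positive multiplicative constant, the renewal function of the weakly descending ladder heights of $(S_{n})$; in particular $H>0$ on $(0,\infty)$, nondecreasing, with $H(x)\sim x$ at infinity by \eqref{eq:growth of H} and $H(x)\ge x$ throughout. Combined with the linear-growth hypothesis on $G$, this makes $\widetilde G(x):=G(x)/H(x)$ bounded for $x$ bounded away from $0$. Dividing \eqref{eq:harmonic V(x)} by $H(x)$ and using the harmonicity of $H$ shows that $\widetilde G$ is harmonic for the stochastic transition kernel
\[
P_H(x,dy)\ :=\ \frac{H(y)}{H(x)}\,\Prob_x(S_1\in dy)\1_{\{y>0\}},
\]
which governs the Markov chain $(X_n)$ on $(0,\infty)$, namely the random walk conditioned to stay positive. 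Consequently, under $\Prob_x^H$ the sequence $\widetilde G(X_n)$ is a martingale that---modulo boundedness issues near $0$ addressed in the last step---converges almost surely to some limit $M_\infty$ with $\Erw_x^H M_\infty = \widetilde G(x)$.

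Second, I would reduce to positive $G$ by adding a suitable multiple of $H$: choosing $\lambda$ so large that $G_\lambda:=G+\lambda H\ge 0$ on $(0,\infty)$---possible since $|G|/H$ is bounded on the relevant range---it suffices to prove the proposition for the positive harmonic function $G_\lambda$ and then subtract $\lambda H$. The heart of the argument is therefore the identification of positive harmonic functions of the walk killed on first entering $(-\infty,0]$: for centered $(S_n)$ with finite variance, every such function should equal $H$ times a $d$-periodic function (constant when $d=0$). A natural route proceeds through the bounded martingale $\widetilde G_\lambda(X_n)$: by coupling two copies of the conditioned walk started from $x_1,x_2>0$ (with $x_1\equiv x_2\pmod d$ in the lattice case) so that they eventually coincide, one forces $M_\infty$ to have the same distribution under $\Prob^H_{x_1}$ and $\Prob^H_{x_2}$, whence $\widetilde G_\lambda(x_1)=\Erw^H_{x_1}M_\infty=\Erw^H_{x_2}M_\infty=\widetilde G_\lambda(x_2)$. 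Right-continuity of $\widetilde G_\lambda$ then propagates the equality on each coset of $d\Z$.

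I expect this Martin-boundary identification to be the main technical obstacle; it is precisely the sharpening of Spitzer's \cite[Thm.~E3]{Spitzer:76} alluded to in the introduction. Beyond the coupling itself, which rests on recurrence of the conditioned walk on shifted copies of its support under finite variance, one must handle the behaviour of $\widetilde G$ near $0$ carefully, since $H(0+)$ may vanish; a standard remedy is to work under $\Prob_x^H$ with $x$ bounded away from $0$ and to use that the conditioned walk spends only bounded expected time in any compact neighbourhood of the origin, then let the truncation vanish. Once this step is settled, the resulting $\kappa_0$ for $G_\lambda = \kappa_0 \cdot H$ yields $G = (\kappa_0-\lambda)\cdot H$, so setting $\kappa := \kappa_0-\lambda$ completes the proof.
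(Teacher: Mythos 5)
Your strategy—$h$-transforming by $H$ and then arguing that bounded harmonic functions for the random walk conditioned to stay positive are constant (resp.\ $d$-periodic)—is a genuinely different route from the paper's, which instead mollifies $G$, extracts a constant/periodic limit $\kappa^{\delta}$ of $G^{\delta}(\cdot+y_{n})/(1+(\cdot+y_{n})_{+})$ via Arzel\`a--Ascoli, identifies it as harmonic for the \emph{unkilled} walk by Lemma~\ref{lem:CD-lemma G}, and then proves $G^{\delta}=\kappa^{\delta}H^{\delta}+\Erw_{\cdot}G^{\delta}(S_{\tau})$ through the overshoot estimates of Lemma~\ref{lem:overshoot L_{1}} and Corollary~\ref{cor:alternativeFormula}. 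Your approach is conceptually attractive because it names the object (Martin boundary of the conditioned walk) directly, but I think it has a real gap exactly where you anticipate.

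The walk conditioned to stay positive is \emph{transient}: under $\Prob^{H}_{x}$ one has $X_{n}\to\infty$ a.s.\ (it behaves like a Bessel-3 process at large scales). So the phrase ``recurrence of the conditioned walk on shifted copies of its support'' is not correct, and a successful coupling of $\Prob^{H}_{x_{1}}$ and $\Prob^{H}_{x_{2}}$ is not a consequence of any routine recurrence. Establishing that two independent copies of a transient chain can be made to coincide eventually is equivalent to the triviality of the tail $\sigma$-field, which is essentially the statement you are trying to prove; leaning on it without proof is close to circular. Matters are worse in the non-arithmetic case: independent copies of a continuous-state transient chain never hit exactly the same point, so an exact coupling requires sharing increments from some random time on, and producing such a coupling without extra regularity assumptions on the step law is genuinely delicate. (Spitzer's Thm.~E3 covers only the $\Z$-valued case, and his proof is not a coupling proof.) In addition, $\widetilde G=G/H$ need not be bounded near $0$ since $H(0+)$ may vanish while $G$ need not, so $\widetilde G_{\lambda}(X_{n})$ is a priori only a local martingale; you flag this, but it requires an actual argument to pass to a uniformly integrable one. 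For all these reasons the proposal, while a legitimate alternative blueprint, does not yet yield a proof; the paper's analytic route (regularization, Choquet--Deny for the unkilled walk, and the Step~1--3 overshoot estimates) is precisely how it sidesteps the coupling and boundary-at-$0$ difficulties.
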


For centered random walks on the integer lattice $\Z$, where \eqref{eq:harmonic V(x)} must only hold for $x\in\Z$, it was already shown by Spitzer \cite[Thm.~E3, p.~332]{Spitzer:76} that there is only one positive solution to \eqref{eq:harmonic V(x)} up to positive multiples (even without additional moment assumptions). More recent work by Doney \cite[Thm.~1]{Doney:98} also considers the case when the $\Z$-valued random walk has nonzero drift.

Before proving our result, we provide some useful estimates and begin with an extension of the harmonic property of $G$ at random times.
\begin{Lemma}\label{lem:harmonicStopped}
Under the assumptions of Proposition \ref{prop:uniqueness H(x)},
\[
  G(x)\,=\,\Erw_{x} G(S_{\sigma(y)})\ind{\sigma(y)<\tau}
\]
holds for all for all $0<x<y$.
\end{Lemma}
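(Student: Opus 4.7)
The plan is a textbook optional stopping argument applied to the natural $G$-weighted martingale of the killed walk. Fix $x \in (0,y)$ and define
\[
M_n\ :=\ G(S_n)\ind{\tau > n},\quad n\ge 0.
\]
I would first verify that $(M_n)_{n\ge 0}$ is a $\Prob_x$-martingale. Using $\ind{\tau > n+1} = \ind{\tau > n}\ind{S_{n+1} > 0}$ together with the Markov property at time $n$ and the harmonicity relation \eqref{eq:harmonic V(x)} applied at the point $S_n > 0$, one computes $\Erw_x[M_{n+1} | \cF_n] = \ind{\tau > n}\,G(S_n) = M_n$, while integrability of each $M_n$ follows from the linear-growth assumption on $G$ together with $\Erw_x|S_n| < \infty$.

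Since $\sigma(y)\wedge n$ is a bounded stopping time, optional stopping then yields
\[
G(x)\ =\ \Erw_x M_{\sigma(y)\wedge n}\ =\ A_n + B_n,
\]
where
\[
A_n\ :=\ \Erw_x G(S_{\sigma(y)})\,\ind{\sigma(y)\le n,\,\sigma(y)<\tau},\qquad B_n\ :=\ \Erw_x G(S_n)\,\ind{n<\sigma(y)\wedge\tau}.
\]
The lemma reduces to checking that $A_n \to \Erw_x G(S_{\sigma(y)})\ind{\sigma(y)<\tau}$ and $B_n \to 0$ as $n\to\infty$. The term $B_n$ is straightforward: on $\{n<\sigma(y)\wedge\tau\}$ we have $S_n\in(0,y]$, so by linear growth $|G(S_n)|\le c(1+y)$, while $\Prob_x(\sigma(y)\wedge\tau > n)\to 0$ because a nontrivial centered walk cannot remain trapped in a bounded interval forever; bounded convergence then gives $B_n\to 0$.

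The main obstacle is passing to the limit in $A_n$. Since $G$ is not assumed nonnegative, monotone convergence is unavailable and I need an integrable dominator, the natural choice being $c(1+S_{\sigma(y)})\ind{\sigma(y)<\tau}$. I would establish its $\Prob_x$-integrability by applying optional stopping to the $L^2$-martingale $S_n^2 - n\Erw S_1^2$ at the time $\sigma(y)\wedge\tau\wedge n$: nontriviality of the walk and $\Erw S_1^2 < \infty$ imply that the exit time from the bounded interval $(0,y]$ has finite expectation, i.e. $\Erw_x(\sigma(y)\wedge\tau) < \infty$, and Fatou's lemma then yields $\Erw_x S_{\sigma(y)\wedge\tau}^2 < \infty$, whence $\Erw_x S_{\sigma(y)}\ind{\sigma(y)<\tau} < \infty$. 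Dominated convergence then delivers $A_n \to \Erw_x G(S_{\sigma(y)})\ind{\sigma(y)<\tau}$, and combining with $B_n\to 0$ completes the proof.
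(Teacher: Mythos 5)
Your proposal is correct and follows the same overall plan as the paper's proof: the martingale $G(S_{n\wedge\tau})$ (your $M_n=G(S_n)\ind{\tau>n}$ is the same object since $G$ vanishes on $(-\infty,0]$), optional stopping at $\sigma(y)\wedge n$, the identical decomposition $G(x)=A_n+B_n$, and the same bound $|B_n|\le C(1+y)\,\Prob_x(\sigma(y)\wedge\tau>n)\to 0$. The one point where you do something genuinely different is the passage to the limit in $A_n$. The paper simply invokes the monotone convergence theorem, which tacitly presumes $G(S_{\sigma(y)})\ge 0$; that is only justified after the reduction ``$G$ may be assumed bounded from below'' carried out inside the proof of Proposition~\ref{prop:uniqueness H(x)}, so the lemma as stated has a small gap if $G$ is read at face value. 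You avoid this by establishing an integrable dominator: from $\Erw_x(\sigma(y)\wedge\tau)<\infty$ (finite mean exit time from a bounded interval for a nontrivial centered walk) and optional stopping of $S_n^2-n\,\Erw S_1^2$ you get $\Erw_x S^2_{\sigma(y)\wedge\tau}<\infty$, hence $\Erw_x S_{\sigma(y)}\ind{\sigma(y)<\tau}<\infty$, and dominated convergence applies with $c(1+S_{\sigma(y)})\ind{\sigma(y)<\tau}$ as dominator. This buys a proof of the lemma that is valid for signed $G$ of linear growth without any preliminary reduction, at the modest extra cost of the $L^2$-martingale/Fatou step. Both routes are fine; yours is slightly more self-contained and arguably the one the lemma's statement actually warrants.
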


\begin{proof}
By \eqref{eq:harmonic V(x)}, $(G(S_{\tau \wedge n})_{n\ge 0}$ is a martingale. Hence, the optional sampling theorem implies
\begin{align*}
G(x)\ &=\ \Erw_{x} G(S_{\sigma(y)\wedge \tau \wedge n})\\
&=\ \Erw_{x} G(S_{\sigma(y)} \ind{\sigma(y)< \tau \wedge n}\,+\,\Erw_{x} G(S_{n}) \ind{n < \sigma(y) \wedge \tau}
\end{align*}
for all $0 < x<y$ and $n\in\N$. As $n\to\infty$, we have
$$ \Erw_{x}G(S_{\sigma(y)} \ind{\sigma(y)< \tau \wedge n}\ \to\ \Erw_{x} G(S_{\sigma(y)} \ind{\sigma(y)< \tau} $$
by the monotone convergence theorem, and
$$ \Erw_{x}G(S_{n}) \ind{n < \sigma(y) \wedge \tau}\ \le\ C(y+1) \Prob_{x}(n < \sigma(y) \wedge \tau)\ \to\ 0. $$
This completes the proof.
\end{proof}

Next are some asymptotic estimates involving the level $a$ overshoot $R_{a}=S_{\sigma(a)}-a$ of the random walk killed upon entering the positive halfline. As a by-product, another formula for $H$ is obtained.

\begin{Lemma}\label{lem:overshoot L_{1}}
Let $(S_{n})_{n\ge 0}$ be a centered random walk with $0<\Erw S_{1}^{2}<\infty$. Then for all $x > 0$, we have
\begin{gather}
\lim_{b\to\infty} \limsup_{a\to\infty} \Erw_{x} S_{\sigma(a)} \ind{\sigma(a)<\tau,R_{a}>b}\ =\ 0 \label{eqn:overshoot1}\\
\shortintertext{and}
\lim_{a\to\infty}\,\Erw_{x}R_{a}\1_{\{\sigma(a)<\tau\}}\ =\ 0. \label{eqn:overshoot2}
\end{gather}
\end{Lemma}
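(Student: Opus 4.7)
The plan is to reduce both claims to an integral against the Green's function
\[
W_{a}(x,dy)\,:=\,\sum_{n\ge 0}\Prob_{x}\bigl(S_{0},\dots,S_{n}\in(0,a],\,S_{n}\in dy\bigr)
\]
of the walk killed at the boundary of $(0,a]$, and then to combine the moment assumption $\Erw S_{1}^{2}<\infty$ with a classical scaling bound on $W_{a}$.

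By the Markov property at the pre-crossing time $\sigma(a)-1$, conditioning on $S_{\sigma(a)-1}=y\in(0,a]$ the crossing increment is an independent copy of $S_{1}$ restricted to $\{X>a-y\}$, and $R_{a}=X-(a-y)$. Setting $\overline F(s):=\Prob(S_{1}>s)$ and $\Phi_{b}(s):=\Erw[(S_{1}-s)\1_{\{S_{1}>s+b\}}]$, direct integration yields
\[
\Erw_{x}S_{\sigma(a)}\1_{\{\sigma(a)<\tau,\,R_{a}>b\}}\,=\,\int_{0}^{a}\bigl[a\,\overline F(a-y+b)+\Phi_{b}(a-y)\bigr]\,W_{a}(x,dy),
\]
while the specialization $b=0$ of just the $\Phi_{b}$-term is exactly $\Erw_{x}R_{a}\1_{\{\sigma(a)<\tau\}}$.

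The second-moment assumption supplies two inputs. First, $\Erw S_{1}^{2}<\infty$ gives $t^{2}\overline F(t)\to 0$, hence $s\overline F(s+b)\to 0$ uniformly in $b\ge 0$, and $s\Phi_{b}(s)\to 0$ with $\Phi_{b}\le\Phi_{0}\in L^{1}(\IRge)$ and $\Phi_{b}\to 0$ in $L^{1}$ as $b\to\infty$. Second, the classical Green's function estimate for a centered finite-variance walk killed on exiting a strip provides the uniform bound $W_{a}(x,dy)\le C(x)(a-y)\,dy/a$ on $\{y\ge x\}$ (matching the Brownian Green's function $\tfrac{x(a-y)}{a}$ up to a constant), while the contribution of $\{y<x\}$ is absorbed into the finite potential $W_{\infty}(x,(0,x])$ of the walk killed only at $\IRle$. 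Substituting this and changing variables $s=a-y$, each integral above is dominated, up to $o(1)$ as $a\to\infty$, by an expression of the form
\[
\tfrac{C(x)}{a}\int_{0}^{a}s\bigl[a\,\overline F(s+b)+\Phi_{b}(s)\bigr]\,ds.
\]
The $a\overline F$-term equals $C(x)\int_{0}^{a}s\overline F(s+b)\,ds\le C(x)\int_{b}^{\infty}t\,\overline F(t)\,dt$, which is independent of $a$ and tends to $0$ as $b\to\infty$ because $\Erw S_{1}^{2}<\infty$. The $\Phi_{b}$-term $\tfrac{C(x)}{a}\int_{0}^{a}s\Phi_{b}(s)\,ds$ tends to $0$ as $a\to\infty$ by Ces\`aro's lemma applied to $s\Phi_{b}(s)\to 0$. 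This yields \eqref{eqn:overshoot1} (taking first $a\to\infty$ then $b\to\infty$) and \eqref{eqn:overshoot2} (by setting $b=0$).

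The main obstacle is the uniform-in-$a$ Green's function bound. Proving $W_{a}(x,dy)\le C(x)(a-y)\,dy/a$ beyond the Brownian analogue requires either a careful combination of duality with the descending ladder process and renewal estimates involving $H$ and its counterpart $H^{*}$, or, alternatively, an optional-stopping argument for the martingale $H(S_{n\wedge\tau})$ that leads to the identity $H(x)=\Erw_{x}H(S_{\sigma(a)})\1_{\{\sigma(a)<\tau\}}$ and then exploits $H(y)=y+O(1)$ together with the classical overshoot limit theorem $a\Prob_{x}(\sigma(a)<\tau)\to H(x)$. Either route rests on the same finite-variance renewal theory, but once the Green's function estimate is available the remaining steps are straightforward consequences of $\Erw S_{1}^{2}<\infty$.
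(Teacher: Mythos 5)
Your proposal follows a genuinely different route from the paper, but it rests on an unproven key estimate, and the alternatives you offer for establishing it do not work.

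The paper proves the lemma in two stages and never needs a pointwise bound on the Green's function of the doubly killed walk. In Step~1 it isolates the contribution of \emph{huge} overshoots $S_{\sigma(a)}>(1+\eps)a$ and bounds it using only the crude \emph{total mass} estimate $\U_{a}([0,a])\le Ca$ (obtained from the ascending ladder renewal function), together with $\Erw S_{1}^{2}<\infty$. In Step~2 it handles the remaining range $b<R_{a}\le\eps a$ by a completely different device: the strong Markov property at $\sigma(a/3)$, the Doob $H$-transform identity $\Erw_{x}\bigl[\tfrac{H(S_{\sigma(a)})}{H(x)}\1_{\{\sigma(a)<\tau\}}\bigr]=1$, and tightness of the overshoot family $\{R_{y}\}_{y\ge 0}$. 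Your scheme instead tries to do everything in one pass by invoking the refined pointwise bound $W_{a}(x,dy)\le C(x)(a-y)\,dy/a$, and your subsequent integral asymptotics against this bound (the Ces\`aro step for $\Phi_{b}$ and the $\int_{b}^{\infty}t\,\overline{F}(t)\,dt$ bound) are fine once that estimate is granted.

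The genuine gap is the Green's function bound itself. It is not a one-line corollary of $\Erw S_{1}^{2}<\infty$; it is the quantitative ``Brownian'' profile of the killed walk near the exit level, and its usual proof involves both renewal functions $H$ and $H^{*}$ (ascending and descending) through ladder duality, which makes it comparable in depth to the lemma you are trying to prove. You acknowledge the gap but the two repair routes you sketch are not convincing. Route~(a) (``duality with the descending ladder process'') is the right direction but is left entirely undeveloped, and it needs more than the total-mass bound $\U_{a}([0,a])\le Ca$: the paper's argument only bounds $\U_{a}([0,a])$, which gives no decay of $W_{a}(x,dy)$ as $y\uparrow a$, and that decay is exactly what your Ces\`aro step requires. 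Route~(b) (optional stopping for $H(S_{n\wedge\tau})$ plus $H(y)=y+O(1)$ and $a\,\Prob_{x}(\sigma(a)<\tau)\to H(x)$) does not produce the Green's function bound at all: the identity $H(x)=\Erw_{x}H(S_{\sigma(a)})\1_{\{\sigma(a)<\tau\}}$ combined with $H(y)=y+O(1)$ gives information about $a\,\Prob_{x}(\sigma(a)<\tau)+\Erw_{x}R_{a}\1_{\{\sigma(a)<\tau\}}$ only, i.e. roughly \eqref{eqn:overshoot2}, and in the paper's logical structure the limit $a\,\Prob_{x}(\sigma(a)<\tau)\to H(x)$ is Corollary~\ref{cor:alternativeFormula}, which is \emph{deduced from} Lemma~\ref{lem:overshoot L_{1}}, so using it here without an external reference is circular; moreover, even granting it, this route says nothing toward \eqref{eqn:overshoot1}. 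You are therefore missing the analogue of the paper's Step~2 — the $H$-transform argument with overshoot tightness — which is the key idea that lets the paper avoid any pointwise Green's function estimate.
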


\begin{proof}
\textsc{Step 1} We first show that
\begin{equation}
  \label{eqn:firstStep}
  \lim_{a\to\infty}\,\Erw_{x} H(S_{\sigma(a)})\1_{\{\sigma(a)<\tau<\infty,\,S_{\sigma(a)}>(1+\eps)a\}}\ =\ 0
\end{equation}
for all $\eps>0$ and $x\ge 0$. As $H$ grows like the identity, we may replace $H(S_{\sigma(a)})$ with $S_{\sigma(a)}$. It is further no loss of generality to choose $x=0$. We then have
\begin{align}
\Erw S_{\sigma(a)}\1_{\{\sigma(a)<\tau,\,S_{\sigma(a)}>(1+\eps)a\}}\ &=\ \int_{(1+\eps)a}^{\infty}\Prob(\sigma(a)<\tau,S_{\sigma(a)}>y)\ dy\nonumber\\
&=\ \int_{(1+\eps)a}^{\infty}\int_{0}^{a}\Prob(S_{1}>y-x)\ \U_{a}(dx)\ dy\nonumber\\
&\le\ \int_{(1+\eps)a}^{\infty}\Prob(S_{1}>y-a)\ \U_{a}([0,a])\ dy\label{eq:crucial}
\end{align}
where
\begin{align*}
\U_{a}(dx)\ &=\ \sum_{n\ge 0}\Prob(S_{n}\in dx,\,0<S_{k}\le a\text{ for }k=0,\ldots,n)\\
&=\ \sum_{n\ge 0}\Prob(S_{n}\in dx,\,0<S_{n}-S_{n-k}\le a\text{ for }k=0,\ldots,n)\\
&=\ \sum_{n\ge 0}\sum_{k\ge 0}\Prob\left(\sigma_{k}=n,\,S_{\sigma_{k}}\in dx,\,S_{\sigma_{k}}\le a+\min_{0\le j\le\sigma_{k}}S_{j}\right)\\
&=\ \sum_{k\ge 0}\Prob\left(S_{\sigma_{k}}\in dx,\,S_{\sigma_{k}}\le a+\min_{0\le j\le\sigma_{k}}S_{j}\right)\\
&\le\ \sum_{k\ge 0}\Prob\left(S_{\sigma_{k}}\in dx\cap (0,a]\right).
\end{align*}
Since $\Erw S_{1}^{2}<\infty$ ensures $\Erw S_{\sigma_{1}}<\infty$, we infer that
$$ \U_{a}([0,a])\ \le\ \sum_{k}\Prob(S_{\sigma_{k}}\le a)\ \le\ Ca $$
for some $C>0$ and all $a\ge 1$. Returning to \eqref{eq:crucial}, we now obtain
\begin{align*}
\Erw S_{\sigma(a)}\1_{\{\sigma(a)<\tau,\,S_{\sigma(a)}>(1+\eps)a\}}\ &\le\ Ca\int_{(1+\eps)a}^{\infty}\Prob(S_{1}>y-a)\ dy\\
&\le\ \frac{C}{\eps}\int_{\eps a}^{\infty}y\,\Prob(S_{1}>y)\ dy
\end{align*}
and the last expression goes to 0 as $a\to\infty$ under the proviso $\Erw S_{1}^{2}<\infty$.

\vspace{.2cm}
\textsc{Step 2}. Next, we show that
\begin{equation*}
\lim_{b\to\infty}\limsup_{a\to\infty}\,\Erw_{x}H(S_{\sigma(a)})\1_{\{\sigma(a)<\tau<\infty,\,R_{a}>b\}}\ =\ 0
\end{equation*}
for all $x\ge 0$, thus proving \eqref{eqn:overshoot1}. Using the strong Markov property at time $\sigma(a/3)$, we have
\begin{align*}
\Erw_{x}H(S_{\sigma(a)})\1_{\{\sigma(a)<\tau<\infty,\,R_{a}>b\}}\ =\ \Erw_{x}H(S_{\sigma(a/3)})\Psi(S_{\sigma(a/3)})\1_{\{\sigma(a/3)<\tau<\infty\}},
\end{align*}
where
$$ \Psi(x)\ :=\ \Erw_{x}\left(\frac{H(S_{\sigma(a)})}{H(x)}\1_{\{\sigma(a)<\tau<\infty,\,b<R_{a}\le a\}}\right) $$
for $x>0$. Observe that
\begin{equation*}
\Psi(x)\ \le\ \Erw_{x}\left(\frac{H(S_{\sigma(a)})}{H(x)}\1_{\{\sigma(a)<\tau<\infty\}}\right)\ =\ \Prob_{x}^{\uparrow}(\sigma(a)<\infty)\ =\ 1,
\end{equation*}
where $\Prob_{x}^{\uparrow}$ denotes the harmonic transform with respect to $H$. Using this, we further obtain
\begin{align}
\begin{split}\label{eq:inequality step 2}
&\Erw_{x}H(S_{\sigma(a/3)})\Psi(S_{\sigma(a/3)})\1_{\{\sigma(a/3)<\tau<\infty\}}\\
&\le\ \Erw_{x}H(S_{\sigma(a/3)})\1_{\{S_{\sigma(a/3)}>2a/3\}}\\
&+\ \Erw_{x}H(S_{\sigma(a/3)})\1_{\{\sigma(a/3)<\tau<\infty,\,S_{\sigma(a/3)}\le 2a/3\}}\sup_{a/3\le y\le 2a/3}\Psi(y).
\end{split}
\end{align}
The first of the two terms on the right-hand side of this inequality converges to 0 as $a\to\infty$ by Step 1. As for the second one, we use that $H$ is harmonic and of linear growth together with Lemma \ref{lem:harmonicStopped} (which also holds for $H$ in the place  of $G$) to bound it by
\begin{align*}
\Erw_{x}H(S_{\sigma(a/3)})\1_{\{\sigma(a/3)<\tau<\infty\}}\sup_{a/3\le y\le 2a/3}\Psi(y)\ =\ H(x)\sup_{a/3\le y\le 2a/3}\Psi(y).
\end{align*}
Furthermore,
\begin{align*}
\sup_{a/3\le y\le 2a/3}\Psi(y)\ &\le\ \frac{H(2a)}{H(a/3)}\sup_{a/3\le y\le 2a/3}\Prob_{y}(\sigma(a)<\tau<\infty,\,b<R_{a}\le a)\\
&\le\ \frac{H(2a)}{H(a/3)}\sup_{0\le y\le a}\Prob(R_{a-y}>b)\\
&=\ (6+o(1))\,\sup_{y\ge 0}\Prob(R_{y}>b)\quad\text{as }a\to\infty.
\end{align*}
Consequently, recalling that $\Erw S_{1}^{2}<\infty$ implies the tightness of the overshoots $R_{a}$ for $a\ge 0$, the second term on the right-hand side of \eqref{eq:inequality step 2} converges to 0 as well when first letting $a$ and then $b$ tend to infinity.

\vspace{.1cm}
\textsc{Step 3}. In order to finally prove the last assertion of the lemma, we first note that, by another appeal to \eqref{eqn:firstStep}, it suffices to show
\begin{equation*}
\lim_{a\to\infty}\,\Erw_{x}R_{a}\1_{\{\sigma(a)<\tau<\infty,\,R_{a}\le a\}}\ =\ 0
\end{equation*}
for all $x>0$. Fix an arbitrary $\eps>0$. By Step 2 and \eqref{eq:growth of H}, we can pick $b>0$ so large that
$$ \limsup_{a\to\infty}\,\Erw_{x}H(S_{\sigma(a)})\1_{\{\sigma(a)<\tau<\infty,\,b<R_{a}\le a\}}\, <\,\frac{\eps}{2} $$
and thus also $a_{0}>0$ such that
$$ \Erw_{x}H(S_{\sigma(a)})\1_{\{\sigma(a)<\tau<\infty,\,R_{a}>b\}}\,<\,\eps $$
for all $a\ge a_{0}$. Consequently, as $a\to\infty$,
\begin{align*}
\Erw_{x}&R_{a}\1_{\{\sigma(a)<\tau<\infty,\,R_{a}\le a\}}\ \simeq\ \Erw_{x}H(R_{a})\1_{\{\sigma(a)<\tau<\infty,\,R_{a}\le a\}}\\
&\le\ b\,\Prob_{x}(\sigma(a)<\tau<\infty)\,+\,\Erw_{x}H(S_{\sigma(a)})\1_{\{\sigma(a)<\tau<\infty,\,R_{a}>b\}}\\
&\le\ o(1)\,+\,\eps
\end{align*}
which completes the proof.
\end{proof}

This result particularly implies the following identity for $H$ that will be useful below in the proof of Proposition \ref{prop:uniqueness H(x)}.

\begin{Cor}\label{cor:alternativeFormula}
For all $x > 0$, we have
$$ H(x)\,=\,\lim_{y\to\infty} y\,\Prob_{x}(\sigma(y)<\tau). $$
\end{Cor}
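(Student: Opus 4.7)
The plan is to combine the harmonicity identity of Lemma~\ref{lem:harmonicStopped} (applied with $G$ replaced by $H$, which is justified in the proof of Step~2 of Lemma~\ref{lem:overshoot L_{1}}) with the overshoot estimate \eqref{eqn:overshoot2} and the linear growth \eqref{eq:growth of H} of $H$. The starting identity is
\[
H(x)\ =\ \Erw_{x}H(S_{\sigma(y)})\1_{\{\sigma(y)<\tau\}}\quad\text{for all }0<x<y.
\]
Writing $S_{\sigma(y)}=y+R_{y}$, the right-hand side splits as
\[
\Erw_{x}H(y+R_{y})\1_{\{\sigma(y)<\tau\}},
\]
and the task is to show that this is asymptotically equal to $y\,\Prob_{x}(\sigma(y)<\tau)$ as $y\to\infty$.

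For this, I would use \eqref{eq:growth of H} in the form $H(z)/z\to 1$ as $z\to\infty$: given $\eps>0$, pick $y_{0}$ so that $(1-\eps)z\le H(z)\le(1+\eps)z$ for all $z\ge y_{0}$. Since $R_{y}\ge 0$, the argument $y+R_{y}$ exceeds $y_{0}$ whenever $y\ge y_{0}$, and consequently for all such $y$,
\[
(1-\eps)\Erw_{x}(y+R_{y})\1_{\{\sigma(y)<\tau\}}\ \le\ H(x)\ \le\ (1+\eps)\Erw_{x}(y+R_{y})\1_{\{\sigma(y)<\tau\}}.
\]
Expanding and invoking \eqref{eqn:overshoot2} in the form $\Erw_{x}R_{y}\1_{\{\sigma(y)<\tau\}}\to 0$, this yields
\[
(1-\eps)\liminf_{y\to\infty}y\,\Prob_{x}(\sigma(y)<\tau)\ \le\ H(x)\ \le\ (1+\eps)\limsup_{y\to\infty}y\,\Prob_{x}(\sigma(y)<\tau).
\]
Letting $\eps\downarrow 0$ then gives both the existence of the limit and the claimed identity.

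There is no genuine obstacle here once the two ingredients (the $H$-version of Lemma~\ref{lem:harmonicStopped} and the overshoot estimate of Lemma~\ref{lem:overshoot L_{1}}) are in hand; the only small point to check is that the linear asymptotics of $H$ may be applied along the random argument $y+R_{y}$, which is automatic because $R_{y}\ge 0$ forces $y+R_{y}\to\infty$ on the entire event $\{\sigma(y)<\tau\}$, uniformly in the value of the overshoot.
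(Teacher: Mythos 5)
Your proof is correct and is essentially the paper's own argument: start from the identity $H(x)=\Erw_{x}H(S_{\sigma(y)})\1_{\{\sigma(y)<\tau\}}$ of Lemma~\ref{lem:harmonicStopped} (valid for $H$ as it is right-continuous, harmonic and of linear growth), then sandwich $H(y+R_{y})$ using \eqref{eq:growth of H} and kill the overshoot term via \eqref{eqn:overshoot2}. Your symmetric $(1\pm\eps)$ bookkeeping is in fact slightly cleaner than the display in the paper (which appears to place the $(1-\eps)$ factor on the wrong side of the inequality), and your remark that $R_{y}\ge 0$ makes the asymptotics of $H$ applicable along the random argument $y+R_{y}$ is exactly the right point to flag.
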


\begin{proof}
By Lemma \ref{lem:harmonicStopped}, for all $y>x>0$, we have
$$ H(x)\,=\,\Erw_{x} H(S_{\sigma(y)}) \ind{\sigma(y) \le \tau}. $$
Using \eqref{eq:growth of H}, for all $\epsilon>0$ and all $y$ large enough, we deduce
\begin{multline*}
(1-\epsilon) \left(y\,\Prob_{x}(\sigma(y)<\tau) + \Erw_{x} R_{y} \ind{\sigma(y)<\tau} \right)\\
\ \le\ H(x)\ \le\ y\,\Prob_{x}(\sigma(y)<\tau) + \Erw_{x} R_{y} \ind{\sigma(y)<\tau}.
\end{multline*}
Now use Lemma \ref{lem:overshoot L_{1}} upon letting $y\to\infty$ and then $\epsilon\to0$ to arrive at the assertion.
\end{proof}

We are now ready to give the proof of the main result of this section.

\begin{proof}[Proof of Proposition \ref{prop:uniqueness H(x)}]
Our proof follows along the same lines as the original one by Choquet and Deny \cite{Choquet+Deny:60}. Note first that, for all $A>0$, the function $G+AH$ satisfies the same assumptions as $G$ and is nonnegative on $[1,\infty)$ for large enough $A$. Therefore, we may assume without loss of generality that $G$ is bounded from below.

We consider the following regularization of the function $G$. For $\delta>0$ and $x\in\R$, put
\begin{equation*}
G^{\delta}(x)\,:=\,\frac{1}{\delta}\int_{x}^{x+\delta} G(z) \dd z.
\end{equation*}
The function $G^{\delta}$ is differentiable, and by assumption its derivative satisfies
\begin{equation*}
|(G^{\delta})'(x)|\,=\, |G(x+\delta)-G(x)|\,\le\,2C (1+x_{+}+\delta).
\end{equation*}
for some $C>0$. As a consequence, $x\mapsto G^{\delta}(x)/(1+x_{+})$ is uniformly continuous and bounded. Hence, by the Arzela-Ascoli theorem, there exist $0<y_{n}\uparrow\infty$ such that $x \mapsto G^{\delta}(x+y_{n})/(1+(x+y_{n})_{+})$ converges, uniformly on compact sets, to a bounded and continuous limit denoted as $\kappa^{\delta}$. The $y_{n}$ may further be chosen from $d\N$ if the random walk is $d$-arithmetic. The next argument shows this function to be harmonic for the random walk (without killing).

Indeed, as $G(x)=0$ for $x\le 0$, we infer from \eqref{eq:harmonic V(x)} that
\begin{gather}
G(x)\ =\ \Erw G(x+S_{1})\ind{x+S_{1}>0}\ =\ \Erw G(x+S_{1})\nonumber
\intertext{and thereupon, by Fubini's theorem,}
G^{\delta}(x)\ =\ \Erw G^{\delta}(x + S_{1})\quad\text{for all }x>0.\label{eq:harmonic G^delta}
\end{gather}
As a consequence,
\begin{align}
\kappa^{\delta}(x)\ &=\ \lim_{n\to\infty} \frac{G^{\delta}(x + y_{n})}{1+(x+y_{n})_{+}}\ =\  \lim_{n\to\infty} \frac{G^{\delta}(x+y_{n})}{y_{n}}\label{eqn:ab}\\
&=\ \lim_{n\to\infty}\frac{\Erw G^{\delta}(x+y_{n}+S_{1})}{y_{n}}\ =\ \Erw\left(\lim_{n\to\infty} \frac{G^{\delta}(x+y_{n}+S_{1})}{y_{n}}\right)\nonumber\\
&=\ \Erw \kappa^{\delta} (x+S_{1}) \nonumber,
\end{align}
for all $x\in\R$, having used \eqref{eq:harmonic G^delta}, then the domination assumption on $G$, and finally the dominated convergence theorem. This proves that $\kappa^{\delta}$ is indeed harmonic for the random walk $(S_{n})_{n\ge 0}$ and thus, by Lemma \ref{lem:CD-lemma G}, either a $d$-periodic continuous function or a constant.

As the next step, we show that
\begin{equation}\label{eqn:nextStep}
G^{\delta}(x)\ =\ \kappa^{\delta}(x) H^{\delta}(x)\,+\,\Erw_{x} G^{\delta}(S_\tau).
\end{equation}
for all $x>0$. First, writing \eqref{eq:harmonic G^delta} as $G^{\delta}(x)=\Erw_{x}G^{\delta}(x+S_{1\wedge\tau})$ for all $x>0$, it follows that $(G^{\delta}(x+S_{n\wedge\tau}))_{n\ge 0}$ forms a martingale and then as in the proof of Lemma \ref{lem:harmonicStopped} that
\begin{equation*}
G^{\delta}(x)\ =\ \Erw_{x}G^{\delta}(S_{\sigma(y)\wedge\tau})\ =\ \Erw_{x}G^{\delta}(S_{\sigma(y)})\ind{\sigma(y)<\tau}\,+\,\Erw_{x} G^{\delta}(S_{\tau}) \ind{\tau<\sigma(y)}
\end{equation*}
for all $0<x<y$. Observe that $\Erw_{x}G^{\delta}(S_{\tau}) \ind{\tau< \sigma(y)}=\Erw_{x} G^{\delta}(S_\tau)$ as $y\to\infty$.

\vspace{.1cm}
Compactly uniform convergence of $G^{\delta}(x+y_{n})/y_{n}$ to $\kappa^{\delta}$ will now be utilized to compute $\lim_{n\to\infty}\Erw_{x} G^{\delta} (S_{\sigma(y_{n})})\ind{\sigma(y_{n})<\tau}$ by bounding it from above and from below separately. Let $\epsilon,K> 0$ and choose $n$ large enough such that
\begin{equation}\label{eqn:unifConv}
\sup_{x \in [0,K]} |G^{\delta}(x+y_{n})/y_{n} - \kappa^{\delta}(x)| \le \epsilon.
\end{equation}
Then
\begin{align*}
\Erw_{x}G^{\delta}(S_{\sigma(y_{n})})\ind{\sigma(y_{n})<\tau}\ &\ge\ \Erw_{x}G^{\delta} (S_{\sigma(y_{n})}) \ind{\sigma(y_{n})<\tau,\,R_{y_{n}}\le K}\\
&\ge\ (\kappa^{\delta}(x)-\epsilon) y_{n}\,\Prob_{x}\big( \sigma(y_{n})<\tau,R_{y_{n}}\le K\big),
\end{align*}
using that $\kappa^{\delta}(S_{\sigma(y_{n})}-y_{n}) = \kappa^{\delta}(x)$ $\Prob_{x}$-a.s. by the periodicity or constancy of $\kappa^{\delta}$ (recall here that the $y_{n}$ are all chosen from $d\N$ if the random walk has lattice-span $d>0$). By letting $n\to\infty$ and use of Corollary \ref{cor:alternativeFormula}, this yields
\begin{multline*}
\liminf_{n\to\infty}\Erw_{x} G^{\delta} (S_{\sigma(y_{n})})\ind{\sigma(y_{n})<\tau} \ge\\
(\kappa^{\delta}(x)-\epsilon)\left(H(x)\,-\,\limsup_{a\to\infty}\,a\,\Prob_{x}\left(\sigma(a)<\tau, R_{a}>K\right)\right),
\end{multline*}
and thereupon, with the help of \eqref{eqn:overshoot1},
$$ \liminf_{n\to\infty}\Erw_{x}G^{\delta}(S_{\sigma(y_{n})})\ind{\sigma(y_{n})<\tau}\ge H(x) \kappa(x). $$
when letting $K\to\infty$ and then $\epsilon\to 0$.

\vspace{.1cm}
For the upper bound, we obtain by proceeding similarly
\begin{multline*}
\Erw_{x} G^{\delta} (S_{\sigma(y_{n})}) \ind{\sigma(y_{n})<\tau}\ \le\ (\kappa^{\delta}(x)+\epsilon)\,(y_{n}+K)\,\Prob_{x}\left( \sigma(y_{n})<\tau,R_{y_{n}}\le K\right)\\
+\ 2C\,\Erw_{x} S_{\sigma(y_{n})} \ind{\sigma(y_{n})<\tau, R_{y_{n}}>K},
\end{multline*}
for sufficiently large $n$, where $G^{\delta}(y) \le 2Cy$ for sufficiently large $y$ has been utilized. Now, by another use of Corollary \ref{cor:alternativeFormula}, \eqref{eqn:overshoot1} and also
$$ \lim_{n\to\infty}K\,\Prob_{x}\left( \sigma(y_{n})<\tau,R_{y_{n}}\le K\right) = 0, $$
we find
$$  \limsup_{n\to\infty}\Erw_{x} G^{\delta} (S_{\sigma(y_{n})})\ind{\sigma(y_{n})<\tau} \le H(x) \kappa^\delta(x) $$
upon letting $n\to\infty$, then $K\to\infty$ and finally $\epsilon\to0$. This completes the proof of \eqref{eqn:nextStep}.

Finally, we observe that the right continuity of $G$ implies $G^{\delta}(x)\to G(x)$ as $\delta\to0$ and in combination with $G^{\delta}(z)=0$ for $z < -\delta$ also
$$ |\Erw_{x} G^{\delta}(S_\tau)|\ \le\ \sup_{z \in [0,\delta]} |G(z)|\ \to\ 0\quad\text{as }\delta\to 0. $$
Consequently, by letting $\delta\to 0$ in \eqref{eqn:nextStep}, we conclude that $\kappa^{\delta}$ converges as well. Its limit $\kappa$ is also $d$-periodic or constant, right-continuous and satisfies $\kappa(x) = G(x)/H(x)$ for all $x > 0$. This finishes the proof.
\end{proof}

\section{The boundary case: proof of Theorem~\ref{thm:mainDerivative}}\label{sec:derivative}

In essence, the same techniques as those used in Section~\ref{sec:nonBoundary} can be used to prove Theorem~\ref{thm:mainDerivative}. However, additional complications arise because, as predicted by the theorem,
\begin{align*}
-\log M(t)\ =\ h(t)t^{\alpha}Z,
\end{align*}
is no longer integrable. As a consequence, it is impossible to directly give an analog of the function $F$ here. Instead, we will have to work with a truncated version of the \BRW\ that only includes individuals in the tree that never went ``too high''.

\vspace{.1cm}
Again, let $f\in\cS(\cM)$ be an arbitrary solution to Eq.~\eqref{eq:functional FPE} and $a>0$. For $n\ge 0$, define
\begin{equation}\label{eqn:defMnshaved}
M_{n}^{(a)}(t)\ :=\ \prod_{\substack{|v|=n\\ tL^{*}(v)<a}}f(tL(v)),
\end{equation}
where $L^{*}(v):=\max_{k\le |v|}L(v(k))$. By another appeal to the branching property of the \BRW, it follows immediately that $(M_{n}^{(a)}(t))_{n\ge 0}$ constitutes a bounded submartingale and therefore converges a.s.~to a limit that we denote by $M^{(a)}(t)$.

\vspace{.1cm}
Recall that assumption \eqref{eqn:ncsDerivative} ensures $\lim_{n\to\infty} \sup_{|v|=n}L(v)=0$ a.s.~and thus $\sup_{v\in\V}L(v)<\infty$. As a consequence, 
\[M^{(a)}(t)\,=\,M(t)\quad\text{a.s. for all }0 \le t<a/\sup_{v\in\V}L(v).\]
In particular, $M^{(a)}(t)$ is positive with positive probability for $a$ large enough.

\vspace{.1cm}
For $t\ge 0$, $a>1$ and $n\in\N_{0}$, we now define
\begin{align*}
Z_{n}^{(a)}(t)\ :=\ \sum_{|v|=n}(tL(v))^{\alpha}H(-\log(tL(v)/a))\,\1_{\{tL^{*}(v)<a\}}
\end{align*} 
where $H(x)=-\Erw S_{\tau(-x)}=x-\Erw_{x}S_{\tau}$ for $x>0$ is the essentially unique right-continuous harmonic solution to \eqref{eq:harmonic V(x)}. The last property entails that $(Z_{n}^{(a)}(t))_{n\ge 0}$ constitutes a nonnegative martingale, which is sometimes called the trimmed martingale for obvious reasons. It was introduced in \cite{BigKyp:04} as a positive martingale with an asymptotic behavior that can be linked to the derivative martingale. In particular, it converges a.s. and in $L^{1}$ to a limit $Z^{(a)}(t)$ under the assumptions of Theorem \ref{thm:mainDerivative} (see \cite[Proposition~A.3]{Aidekon:13}). By similar arguments as the ones above for the martingale $M^{(a)}(t)$ in combination with $H(x)=0$ for $x\le 0$ and $H(x)\simeq x$ as $x\to\infty$, we see that, if $t\ge 0$ and $a>t\,\sup_{v\in\V}L(v)$, then almost surely
\begin{align*}
Z^{(a)}(t)\ &=\ \lim_{n\to\infty}\sum_{|v|=n}(tL(v))^{\alpha}H(-\log(tL(v)/a))\\
&=\ \lim_{n\to\infty}\sum_{|v|=n}(tL(v))^{\alpha}(-\log(tL(v)/a))\\
&=\ \lim_{n\to\infty}\sum_{|v|=n}(tL(v))^{\alpha}(-\log L(v)) + t^{\alpha}\log(a/t) \sum_{|v|=n}L(v)^{\alpha}\ =\ t^{\alpha}Z,
\end{align*}
where we have also used that the additive martingale $\sum_{|v|=n}L(v)^{\alpha}$, $n\ge 0$, converges to $0$ a.s.~as $n\to\infty$ (see \cite{Lyons:97}).

\vspace{.1cm}
Using these observations, we now prove the following tameness result and counterpart of Lemma~\ref{lem:tameness} in the boundary case.

\begin{Lemma}\label{lem:tamenessBoundary}
Under the assumptions of Theorem~\ref{thm:mainDerivative}, for any function $f\in\cS(\cM)$ there exists $0 < C < \infty$ such that
\begin{equation}\label{eqn:tamenessBoundary}
\sup_{0<t\le 1}\frac{\log f(t)}{t^{\alpha}\log t}\ \le\ C.
\end{equation}
\end{Lemma}

\begin{proof}
Proceeding in a similar manner as in the proof of Lemma~\ref{lem:tameness},
we prove that, given any $f\in\cS(\cM)$, failure of \eqref{eqn:tamenessBoundary}, that is
\begin{equation}\label{eq:failure of tamenessBoundary}
\limsup_{t\to 0}\frac{\log f(t)}{t^{\alpha}\log t}\ =\ \infty,
\end{equation}
entails $M(t)\le \1_{\{Z=0\}}$ a.s.~for all $t,a>0$ and thus $f(0+)< 1$ which is impossible. We confine ourselves to the main steps of the proof as technical details are very similar to those in the proof of Lemma~\ref{lem:tameness}.

\vspace{.1cm}
Given \eqref{eq:failure of tamenessBoundary}, we can find two decreasing null sequences $(\vth_{n})_{n\ge 1}$ and $(\rho_{n})_{n\ge 1}$ such that, for all $n\in\N$ and $x\in [\rho_{n}\vth_{n},\vth_{n}]$,
\begin{align*}
-\log f(x)\,\ge\,n x^{\alpha}(-\log x).
\end{align*}
We then bound the conditional expectation of $M^{(a)}(t)$ given $\cG_{\vth_{n}/t}$. Namely, by the branching property of the \BRW,
\begin{align*}
M^{(a)}(t)\ &=\ \lim_{n\to\infty}\prod_{\substack{v\in\Upsilon_{\vth_{n}/t}\\ tL^{*}(v)<a}}f(tL(v))\ =\ \exp\Bigg(-\lim_{n\to\infty}\sum_{\substack{v\in\Upsilon_{\vth_{n}/t}\\ tL^{*}(v)<a}}\hspace{-.2cm}-\log f(tL(v))\Bigg).
\end{align*}
Bounding $-\log f(x)$ by $nx^{\alpha}(-\log x)$ if $x\in [\rho_{n}\vth_{n},\vth_{n}]$, and by $0$ otherwise, we then obtain
\begin{align}\label{eq:bound M^a(t)}
M^{(a)}(t)\ \le\ \exp\Bigg(-\limsup_{n\to\infty}\ n\hspace{-.5cm}\sum_{\substack{v\in\Upsilon_{\vth_{n}/t}\\ tL(v)\ge\rho_{n}\vth_{n},\,tL^{*}(v)<a}}\hspace{-.1cm} (tL(v))^{\alpha}(-\log tL(v))\Bigg).  
\end{align}
On the other hand, as $\bigvee_{n\ge 1} \cG_{\vth_{n}/t} =\cF_\infty$, we infer
\begin{align*}
Z^{(a)}(t)&\ =\ \lim_{n\to\infty}\Erw\big(Z^{(a)}(t)|\cG_{\vth_{n}/t}\big)\\
&\ =\ \lim_{n\to\infty}\sum_{\substack{v\in\Upsilon_{\vth_{n}/t}\\ tL^{*}(v)<a}}(tL(v))^{\alpha}H\big(-\log(tL(v)/a)\big).
\end{align*}
Therefore, by another use of $\lim_{n\to\infty}\max_{|v|=n}L(v)=0$ and $H(x) \simeq x$ as $x \to\infty$, we obtain for all $t\ge 0$
\begin{align*}
&\lim_{n\to\infty} \sum_{\substack{v\in\Upsilon_{\vth_{n}/t}\\ tL^{*}(v)<a}}(tL(v))^{\alpha}(-\log (tL(v)))\\
&=\ \lim_{n\to\infty}\left(\sum_{v\in\Upsilon_{\vth_{n}/t}}(tL(v))^{\alpha}H(-\log (tL(v)/a))\ +\ \log a\sum_{v\in\Upsilon_{\vth_{n}/t}}L(v)^{\alpha}\right)\\
&=\ Z^{(a)}(t)\quad \text{a.s.}
\end{align*}
Next, the many-to-one lemma provides us with (recall $\Prob_{t}=\Prob(\cdot|S_{0}=-\log t)$)
\begin{align*}
&\Erw\Bigg(\sum_{\substack{v\in\Upsilon_{\vth_{n}/t}\\ tL(v)\le\rho_{n}\vth_{n},\,tL^{*}(v)<a}} (tL(v))^{\alpha}(-\log tL(v))\Bigg)\\
&\hspace{.5cm}=\ t^{\alpha}\,\Erw(S_{\sigma(\log(t/\vth_{n}))}-\log t)\1_{\{R_{\log(t/\vth_{n})}\ge-\log\rho_{n},\,\sigma(\log(t/\vth_{n}))\le\tau(\log(t/a))\}}\\
&\hspace{.5cm}=\ t^{\alpha}\,\Erw_{t}S_{\sigma(\log(1/\vth_{n}))}\1_{\{R_{\log(1/\vth_{n})}\ge-\log\rho_{n},\,\sigma(\log(1/\vth_{n}))\le\tau(\log(1/a))\}},
\end{align*}
which by Lemma \ref{lem:overshoot L_{1}} converges to $0$ as $n\to\infty$.

\vspace{.1cm}
By combining the previous facts, we obtain
\begin{align*}
\lim_{n\to\infty} \sum_{\substack{v\in\Upsilon_{\vth_{n}/t}\\ tL(v)\ge\rho_{n}\vth_{n},\,tL^{*}(v)<a}}\hspace{-.1cm} (tL(v))^{\alpha}(-\log tL(v))\ =\ Z^{(a)}(t) \quad \text{a.s.}
\end{align*}
and therefore with the help of \eqref{eq:bound M^a(t)} that $M^{(a)}(t)\le\1_{\{Z^{(a)}(t)=0\}}$ for all $a,t>0$. But the latter entails $M(t)\le\1_{\{Z=0\}}$ upon letting $a\to\infty$ which is impossible because it would imply $f(t)=\Prob(Z=0)<1$ for all $t>0$, a contradiction to $f(0+)=1$.
\end{proof}

Using the tameness assumption for fixed points of the smoothing transform, we now can identify the function defined by
\begin{equation}
  \label{eqn:defineHa}
  F^{(a)}(t) = \Erw\left(- \log M^{(a)}(t)\right).
\end{equation}
We prove this function to be harmonic for the random walk $(S_{n})_{n\ge 0}$ killed when hitting $(-\infty,0]$ and  therefore to be a multiple of $H$.

\begin{Lemma}\label{lem:identifyHa}
Let $r\ge 1$ be the span of $T$. Assuming \eqref{eqn:ncsDerivative}, for any function $f \in \mathcal{S}(\mathcal{M})$ with disintegration $M(t)$ and all $a > 0$, there exists a function $h^{(a)}$, multiplicatively $r$-periodic if $r>1$ and constant otherwise, such that
$$ F^{(a)}(t)\,=\,t^{\alpha}\,H(-\log (t/a))\,h^{(a)}(t) $$
for all $t\ge 0$.
\end{Lemma}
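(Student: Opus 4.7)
The plan is to follow the three-step scheme used for Theorem~\ref{thm:mainNonBoundary} (outlined at the end of Section~\ref{sec:intro}), with Proposition~\ref{prop:uniqueness H(x)} replacing Lemma~\ref{lem:CD-lemma G}. The idea is to suitably normalize and translate $F^{(a)}$ so that it becomes a killed-harmonic function of the random walk $(S_n)_{n\ge 0}$ with at most linear growth, and then invoke Proposition~\ref{prop:uniqueness H(x)} to identify it as a multiple of $H$.

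Since $L^*(v)\ge 1$ always, the product defining $M^{(a)}_n(s)$ is empty for $s\ge a$, so $M^{(a)}(s)=1$ and $F^{(a)}(s)=0$ there. For $0<t<a$, a first-generation decomposition of the \BRW, using the identity $tL^*(v)<a\Leftrightarrow tT_{v_1}L^{v_1,*}(w)<a$ (valid since $L^{v_1,*}(w)\ge 1$), gives $M^{(a)}(t)=\prod_{j\ge 1}M^{(a),j}(tT_j)$ with i.i.d.\ copies of $M^{(a)}$ given $\cF_1$, and hence formally $F^{(a)}(t)=\Erw\sum_{j\ge 1}F^{(a)}(tT_j)$. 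I would make this rigorous and bound $F^{(a)}$ by applying Lemma~\ref{lem:tamenessBoundary} and the many-to-one Lemma~\ref{lem:manytoone}, splitting the sum over $|v|=n$ into the part with $tL(v)\le 1$ (dominated by $Z_n^{(a)}(t)$, of expectation $t^\alpha H(-\log(t/a))$, using $H(x)\ge x$) and the part with $1<tL(v)<a$ (dominated by $-\log f(1)$ times the restricted additive martingale, of total expectation at most $t^\alpha$). The resulting uniform bound $\Erw(-\log M_n^{(a)}(t))\le C t^\alpha(1+H(-\log(t/a)))$ together with the submartingale property of $(-\log M_n^{(a)}(t))_{n\ge 0}$ yields $L^1$-convergence, the integrability of $-\log M^{(a)}(t)$, and the same bound for $F^{(a)}$.

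Next, setting $\tilde G(y):=a^{-\alpha}e^{\alpha y}F^{(a)}(ae^{-y})$ for $y>0$ and $\tilde G(y):=0$ for $y\le 0$, the SFPE combined with the many-to-one identity \eqref{eqn:defManytooneRW} translates into the killed-harmonic equation
\[\tilde G(y)\ =\ \Erw\tilde G(y+S_1)\ind{y+S_1>0}\quad\text{for all }y>0,\]
while the preceding bound becomes the linear-growth estimate $\tilde G(y)\le C(1+H(y))\le C'(1+y)$, and right-continuity of $\tilde G$ is inherited from left-continuity of $f$ via dominated convergence. Proposition~\ref{prop:uniqueness H(x)} then yields $\tilde G(y)=\kappa(y)H(y)$ with $\kappa$ either $\log r$-periodic (if $r>1$) or constant (if $r=1$). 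Reversing the substitution $t=ae^{-y}$ gives $F^{(a)}(t)=t^\alpha H(-\log(t/a))\,h^{(a)}(t)$ with $h^{(a)}(t):=\kappa(\log(a/t))$ of the required form.

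The main obstacle I anticipate is the bound of the middle step: tameness only directly controls $-\log f(s)$ for $s\le 1$, so vertices with $tL(v)\in (1,a)$ must be handled separately by crude monotonicity; fortunately their total contribution is $O(t^\alpha)$, which merges cleanly into the $1+H$ growth estimate required by Proposition~\ref{prop:uniqueness H(x)}. A secondary subtlety is the right-continuity of $\tilde G$ at points where the truncation event $\{tL^*(v)<a\}$ jumps; this is addressed by dominated convergence with the above bound, or failing that, by the regularization built into the proof of Proposition~\ref{prop:uniqueness H(x)}.
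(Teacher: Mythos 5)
Your overall plan matches the paper's: bound $F^{(a)}$, show that $g^{(a)}(x):=a^{-\alpha}e^{\alpha x}F^{(a)}(ae^{-x})$ is harmonic for the killed random walk with at most linear growth, and invoke Proposition~\ref{prop:uniqueness H(x)}. Two intermediate details need repair, however. First, in your split for the range $1<tL(v)<a$, monotonicity gives $-\log f(tL(v))\le -\log f(a)$ (the largest, not the smallest, value of $-\log f$ on $(1,a)$); $-\log f(1)$ is the wrong endpoint, although the corrected constant still yields an $O_a(t^\alpha)$ estimate. Second, and more seriously, $\big(-\log M_n^{(a)}(t)\big)_{n\ge 0}$ is \emph{not} a submartingale: $M_n^{(a)}(t)$ is a bounded positive submartingale and $x\mapsto -\log x$ is convex but \emph{decreasing}, so Jensen's inequality and the submartingale property pull in opposite directions, and a simple two-point example shows the claimed monotonicity of expectations can fail. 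You can salvage the passage to the limit via Fatou's lemma (the sequence is nonnegative and converges a.s.), but the paper avoids the finite-$n$ bound and the case split entirely: since $\sup_{|v|=n}L(v)\to 0$ a.s., for each $\omega$ all terms in the tail of the limiting sum eventually have $tL(v)\le 1$, so Lemma~\ref{lem:tamenessBoundary} applies to \emph{every} such term and gives the pointwise bound $-\log M^{(a)}(t)\le CZ^{(a)}(t)$ a.s.~directly, using $H(-\log(tL(v)/a))\ge -\log(tL(v))$ for $a\ge1$; uniform integrability of the nonnegative martingale $(Z_n^{(a)}(t))_{n\ge 0}$ then yields $F^{(a)}(t)\le Ct^{\alpha}H(-\log(t/a))$. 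The remaining steps of your argument — the first-generation recursion for $F^{(a)}$, right-continuity of $g^{(a)}$ via dominated convergence, and the application of Proposition~\ref{prop:uniqueness H(x)} — coincide with the paper's proof.
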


\begin{proof}
The proof follows similar lines as the one of Lemma~\ref{lem:identificationOfH}. We prove that the function $F^{(a)}$ defined in \eqref{eqn:defineHa} is related to a harmonic function for the random walk conditioned to stay positive which in turn allows us to characterize it up to multiplication by a multiplicatively $r$-periodic or constant function.

Recalling that
\begin{align*}
-\log M^{(a)}(t)\ =\ \lim_{n\to\infty} \sum_{|v|=n} -\log f(tL(v))\1_{\{tL^{*}(v)<a\}}\quad\text{a.s.}
\end{align*}
for all $a\le 1$ and $t>0$, Lemma~\ref{lem:tamenessBoundary} implies
\begin{align}\label{eqna}
0\ \le\ -\log M^{(a)}(t)\ \le\ C Z^{(a)}(t)\quad\text{a.s.}
\end{align}
Consequently, as $(Z^{(a)}_{n}(t))_{n\ge 0}$ is uniformly integrable, we infer that
\begin{align*}
F^{(a)}(t)\ =\ \Erw\left(-\log M^{(a)}(t)\right)\ \in\ [0, C t^{\alpha}H(-\log (t/a))].
\end{align*}
for all $t\le a$.

By conditioning with respect to $\mathcal{F}_{1}$ and use of the many-to-one lemma, it follows that 
\begin{align*}
F^{(a)}(t)\ &=\ \Erw\left(\sum_{|u|=1} F^{(a)}(tL(u))\1_{\{t L(u)<a\}}\right)\\
&=\ \Erw\left(F^{(a)}(te^{-S_{1}})e^{\alpha S_{1}}\1_{\{S_{1} - \log t > -\log a\}}\right)
\end{align*}
for all $t\le a$. Hence, the function $g^{(a)}(x):=a^{-\alpha}e^{\alpha x}F^{(a)}(ae^{-x})$ satisfies
$$ g^{(a)}(x)\ =\ \Erw g^{(a)}(x+S_{1}) \1_{\{x+S_{1} >0\}} $$
for all $x>0$, and it is right-continuous because $f$ is left-continuous, by dominated convergence. Furthermore, \eqref{eqna} implies that 
$$ g^{(a)}(x)\ \le\ Ca^{-\alpha} e^{\alpha x} \Erw Z^{(a)}(ae^{-x})\ \le\ C H(x), $$
and thus the required boundedness of $(1+x)^{-1}g^{(a)}(x)$. Invoking Proposition \ref{prop:uniqueness H(x)}, we conclude that $g^{(a)}$ equals $\kappa^{(a)}H$ for some $\kappa^{(a)}$ which is $\log r$-periodic if $T$ has span $r\ne 1$, and is constant otherwise. 
The proof is completed by rewriting this result in terms of $F^{(a)}(t)$ and putting $h^{(a)}(t):=\kappa^{(a)}(-\log(t/a))$.
\end{proof}

With the help of the last lemma, we are now able to given an explicit expression for $M^{(a)}(t)$ and thereby to find the value of $M(t)$ upon letting $a\to\infty$.

\begin{proof}[Proof of Theorem~\ref{thm:mainDerivative}]
By the branching property of the \BRW, we have almost surely
\begin{align*}
&\Erw\left(-\log M^{(a)}(t)\middle|\cF_{n}\right)\ =\ \sum_{|v|=n}F^{(a)}(tL(v))\,\1_{\{tL^{*}(v)<a\}}\\
&\quad=\ h^{(a)}(t)\sum_{|v|=n}(tL(v))^{\alpha}\,H(-\log (tL(v)/a))\,\1_{\{tL^{*}(v)<a\}}\ =\ h^{(a)}(t)Z_{n}^{(a)}(t)
\end{align*}
for all $n\in\N$. Letting $n\to\infty$, this yields
\begin{align*}
M^{(a)}(t)\ =\ e^{-h^{(a)}(t) Z^{(a)}(t)} \quad \text{a.s.}
\end{align*}
Therefore, for all $a$ large enough and all $t \in [0,a/\sup_{v\in\V}L(v)]$,
$$ M(t)\ =\ M^{(a)}(t) = e^{-h^{(a)}(t)t^{\alpha} Z}\quad\text{a.s} $$
In particular, as $h^{(a)}$ is multiplicatively $r$-periodic or constant, we infer that $h^{(a)}=h$ does not depend on $a$ for $a$ large enough. Since
$$ \Psi(t)\ =\ \Erw\left(M(t)\right)\ =\ \Erw\left(e^{-h(t)t^{\alpha}Z}\right), $$
the proof is complete when finally noting that $h\in\mathcal{H}_{r}$ follows from the fact that $\Psi(t)$ is nonincreasing.
\end{proof}

\section{Fixed points of the smoothing transform and fractal measures on the boundary of the \BRW}\label{sec:measure}

The purpose of this supplementary section is to show that any fixed point of the smoothing transform \eqref{eqn:smoothingTransform} can be thought of as the total mass of a random fractal measure on the boundary of the associated \BRW. More precisely, this connection is established by a one-to-one map between these fixed points and random measures $\nu$ on the boundary $\partial\V$ of the tree (see below for details) such that, for all $u\in\V$,
$$  \frac{\nu\left(\left\{v\in\partial\V:v(|u|)=u\right\}\right)}{L(u)} $$
is independent of $\mathcal{F}_{|u|}$ and has the same law as $\nu(\V)$. Any random measure $\nu$ of this kind is called \emph{fractal}.

Let $X$ be a random variable with Laplace transform $f\in\cS(\cL)$, its law thus a fixed point of the smoothing transform. Then there exists a family $(X(v))_{v\in\V}$ of copies of $X$, defined on the same probability space as the multiplicative \BRW\ $L=(L(v))_{v\in\V}$ (possibly enlarged) such that
\begin{equation}\label{eqn:coupling}
X(v)\ =\ \sum_{j\ge 1}T_{j}^{v}X(vj)\ =\ \sum_{j\ge 1}\frac{L(vj)}{L(v)} X(vj)
\end{equation}
for all $v\in\V$. Namely, let $\{X^{(n)}(v):|v|=n\}$ for any $n\in\N$ denote a family of independent copies of $X$ which are also independent of $\{L(v):|v|=n\}$. For $v\in\V$ with $|v| < n$, we then define recursively
\begin{align*}
X^{(n)}(v)\ =\ \sum_{j\ge 1}\frac{L(vj)}{L(v)}X^{(n)}(vj).
\end{align*}
As $X$ satisfies \eqref{eqn:smoothingTransform} and by the branching property of the \BRW, we see that each $X^{(n)}(v)$ is a copy of $X$ and depends only on the variables defined on the subtree rooted at vertex $v$. The existence of $(X(v))_{v\in\V}$ with the claimed properties is now ensured by Kolmogorov's consistency theorem because the laws of
$$ \{(X^{(n)}(v),L(v)):|v|\le n\},\quad n\in\N $$
constitute a projective familiy.

\vspace{.1cm}
Recall that $\partial\V=\N^\N$ denotes the boundary of the tree $\V$ and becomes a complete metric space when endowed with the ultrametric distance
$$ d(u,v)\,=\,\exp(-\min\{k\ge 1:u_{k}\ne v_{k}\}). $$
Putting
\begin{align*}
B(u)\,:=\,\left\{v\in\partial\V:v(|u|)=u\right\}
\end{align*}
for $u\in\V$, the family $(B(u))_{u\in\V}$ forms a basis of the topology on $\partial\V$ and its Borel $\sigma$-field. 

\vspace{.1cm}
With the help of the familiy $(X(v))_{v\in\V}$ introduced above, a one-to-one map between the fixed points of the smoothing transform and the random fractal measures on $\partial\V$ can now be constructed as follows. Observe that, for any such $\nu$, the total mass $\nu(\partial\V)$ is a fixed point of the smoothing transform associated with $L$. This follows because, by $\sigma$-additivity of $\nu$,
\begin{align*}
\nu(\partial\V)\ =\ \nu\Bigg(\bigcup_{|v|=1}B(v)\Bigg)\ =\ \sum_{|v|=1}\nu(B(v))\ =\ \sum_{|v|=1}L(v)\frac{\nu(B(v))}{L(v)},
\end{align*}
and the fractal property of $\nu$.

\vspace{.1cm}
Conversely, the above construction allows us to define a fractal measure $\nu$ for each fixed point of the smoothing transform such that the law of $\nu(\partial\V)$ equals this fixed point. 
Namely, with $(X(v))_{v\in\V}$ as defined above, we put
\begin{align*}
\nu(B(v))\,:=\,X(v)L(v)
\end{align*}
for any $v\in\V$. By \eqref{eqn:coupling}, this provides a well-defined consistent $\sigma$-additive measure on $\N^{k}$ for each $k\in\N$. Thus, by another use of Kolmogorov's consistency theorem, we can extend $\nu$ to a random measure on $\partial\V$, and it has the fractal property by definition as $X(v)$ is independent of $\cF_{|v|}$ for any $v$.

\vspace{.1cm}
Under the assumptions of Theorem~\ref{thm:mainNonBoundary} or~\ref{thm:mainDerivative}, the fractal measure $\nu$ can be even explicitly defined as a marked Poisson point process, namely
\begin{align*}
\nu\ =\ \sum_{j\ge 1}\xi_{j}\delta_{v^{j}},
\end{align*}
where $(\xi_{j}, v^{j})$ are the atoms of a bivariate Poisson point process on $\IRg\times\partial\V$ with intensity $\pi(\dd x)\mu_{\alpha}(\dd v)$. Here $\pi$ equals the L\'evy jump measure of a L\'evy process with characteristic exponent $h(t)t^{\alpha}$, $h$ the function associated with $f$ by the respective theorem.
Moreover, $\mu_{\alpha}$ denotes the random measure on $\partial\V$ defined by
\begin{align*}
\mu_{\alpha}(B(v))\ =\ \lim_{n\to\infty} \sum_{|u|=n,u\succ v}L(u)^{\alpha}
\end{align*}
in the regular case (assuming \eqref{eqn1:regular case} and \eqref{eqn2:regular case}), and by
\begin{align*}
\mu_{\alpha}(B(v))\ =\ \lim_{n\to\infty}\sum_{|u|=n,u\succ v}(-\log L(u))L(u)^{\alpha}
\end{align*}
in the boundary case (assuming \eqref{eqn:boundaryCase} and \eqref{eqn:ncsDerivative}) where the former definition would only give the null measure. Indeed, with $\mu_{\alpha}$ thus defined and Campbell's formula, we obtain that
\begin{align*}
\Erw e^{-t\nu(B(v))}\ =\ \Erw \exp(-t^{\alpha} h(t) \mu_\alpha(B(v)))\ =\ f(t),
\end{align*}
for all $v \in \V$ as expected.

\section*{Acknowledgments}

Most of this work was done during a visit of the second author in March 2019 at the University of M\"unster. Financial support and kind hospitality are gratefully acknowledged.

\section*{Data availability statement}

This manuscript was produced with no additional data.

\def\cprime{$'$}
\providecommand{\bysame}{\leavevmode\hbox to3em{\hrulefill}\thinspace}
\providecommand{\MR}{\relax\ifhmode\unskip\space\fi MR }
\providecommand{\MRhref}[2]{%
  \href{http://www.ams.org/mathscinet-getitem?mr=#1}{#2}
}
\providecommand{\href}[2]{#2}

\end{document}